\documentclass[11pt]{article}

\usepackage[margin=1in]{geometry}
\usepackage{amsmath, amssymb, amsthm, mathtools}
\usepackage{bbm}
\usepackage{enumitem}
\usepackage{microtype}
\usepackage{hyperref}
\usepackage{color}

\newcommand{\cube}{\{-1,1\}^n}
\newcommand{\E}{\mathbb{E}}
\newcommand{\R}{\mathbb{R}}
\newcommand{\Z}{\mathbb{Z}}
\newcommand{\norm}[1]{\left\lVert #1\right\rVert}
\newcommand{\abs}[1]{\left\lvert #1\right\rvert}

\newtheorem{theorem}{Theorem}[section]
\newtheorem{lemma}[theorem]{Lemma}
\newtheorem{corollary}[theorem]{Corollary}
\newtheorem{proposition}[theorem]{Proposition}
\newtheorem{remark}[theorem]{Remark}
\newtheorem{property}[theorem]{Property}

{\end{list}}

\title{Dimension-free bounds for Riesz transforms on the Hamming cube via a Bellman function}

\author{Komla Domelevo, Paata Ivanisvili, Stefanie Petermichl and Alexander Volberg}
\date{\today}

\begin{document}
\maketitle

\begin{abstract}
We give a Bellman-function proof of the dimension-free estimate
\[
\Big\| \vec{R} f \Big\|_{L^p(\Omega;\,\ell^2)}
\lesssim (p-1) \,\|f\|_{L^p(\Omega)},
\qquad 2\le p<\infty,
\]
for the vector of Riesz transforms associated with the Walsh number operator on the Hamming cube $\Omega=\cube$, as well as for locally compact abelian groups, in particular $\Omega=\mathbb{Z}^n$.
The argument is based on a Poisson semigroup representation, symmetrized estimates along edges of $\Omega$, and a two-point inequality. This is the first non noncommutative proof of this result, after the seminal papers of Lust-Piquard and later Junge--Mei--Parcet.
According to an example of Lamberton, for \(1<p<2\) such a dimension-free bound is known to be false.
\end{abstract}

\tableofcontents

\section{Introduction and main results}

Dimension-free estimates for discrete Riesz transforms on product spaces go back to work of Lust-Piquard \cite{LP98,LP04}, and later extended in several works in particular in the work of Junge--Mei--Parcet \cite{JMP18}.
For instance, on the Hamming cube \(\cube\) (Walsh setting), Lust-Piquard proved dimension-free estimates for \(p\ge 2\) and pointed out, quoting a result of Lamberton, that the naive \(L^p\to L^p(\ell^2)\) dimension-free estimate fails for \(1<p<2\); see \cite{LP98}.
Those proofs relied heavily on noncommutative methods, even though the statement of the dimensionless estimate has no noncommuative elements itself. The noncommutative methods were introduced due to the inherent difficulties related to finite differences that replace continuous derivatives. Indeed, Lamberton's example for the negative result for $1<p<2$ is based on the observation that discrete derivatives of functions may have larger support than the functions themselves.

The purpose of this note is to present a Bellman-function proof for the range \(p\ge 2\), in the spirit of the Littlewood--Paley Bellman functions of Nazarov--Treil \cite{NT96} and Dragi\v cevi\'c--Volberg \cite{DV06}. To overcome the obstacle of discrete derivatives, we introduce  the new ``Two--point inequality'' method.

The difficult work of Lust-Piquard was a milestone at its time, full of novel ideas and heavily used noncommutative methods with definite obstacles for $p<2    $. It was believed for a long time that noncommutative methods could not be circumvented in this estimate, but we finally give a basic, commutative, deterministic proof for the dimensionless bound. 

We mention the generalizations of the work of Lust-Piquard as well as related results on discrete groups: in the work of Junge--Mei--Parcet \cite{JMP18} novel ideas extended the methods of Lust-Piquard and Pisier and resulted in more general estimates, including those for discrete Riesz vectors. For the second order Riesz transform the situation is simpler. Both the sharp and dimensionless estimate are available in Domelevo--Petermichl \cite{DP14}. While only for one possible definition of discrete Hilbert transform the sharp estimate is known for all $1<p<\infty$ in Ba{\~n}uelos--Kwa{\'s}nicki \cite{BK19}.  

The results presented in this paper are valid for any locally compact abelian group (see \cite{LP04} for the definition of the operators).
This includes in particular the Hamming cube $\Omega=\cube=(\Z/2\Z)^n$ and the grid $\Omega=\Z^n$, but also products of cyclic groups for example.
We only expose the proofs for the cases $\Omega=\cube$ and $\Omega=\Z^n$, but they extend \textit{mutatis mutandis} to any locally compact abelian groups.

\subsection{Discrete Riesz vector on the Hamming cube}

The geometry of the Hamming cube and the corresponding discrete operators deserve a special exposition.
We consider the Hamming cube \(\cube\) with the uniform probability measure \(\mu\) and expectation \(\E\).
For \(x=(x_1,\dots,x_n)\in\cube\) and \(1\le j\le n\) let \(x^{(j)}\) be the point obtained from \(x\) by flipping the \(j\)-th coordinate.

Define the discrete derivatives
\[
D_j f(x)\coloneqq \frac{f(x)-f\big(x^{(j)}\big)}{2},\qquad 1\le j\le n.
\]
Observe that its formal adjoint is $D_j^\ast=D_j$. It follows that the corresponding Laplacian is
\[
\Delta f \coloneqq \sum_{j=1}^n D_j^\ast D_j f = \sum_{j=1}^n  D_j f.
\]
because $D_j^2=D_j$. This operator $\Delta$ is the nonnegative (Walsh) number operator with square root $A=\Delta^{1/2}$.
We write further \(\nabla f = (D_1 f,\dots,D_n f)\) and \(\abs{\nabla f}^2=\sum_{j=1}^n \abs{D_j f}^2\).

The \(j\)-th Riesz transform is
\[
R_j \coloneqq D_j \Delta^{-1/2}
\]
and the Riesz vector is $\vec{R}=(R_1,...,R_n)$.

\subsection{Discrete Riesz vector on \texorpdfstring{$\Z^n$}{Zn}}

If $e_j=(0,\ldots,0,1,0,\ldots,0)$ denotes the $j$-th standard basis vector,
we define the partial discrete derivative in the $i$-th direction $\partial_j f$ of a function $f$ defined on $\Z^n$ as
\[ \partial_j f(x) = f(x+e_j)-f(x) , \quad\forall x\in\Z^n. \]
Its formal adjoint is
\[ \partial_j^\ast f(x) = f(x-e_j)-f(x) , \quad\forall x\in\Z^n. \]
The gradient operator is the collection of the $2n$ discrete derivatives
 $\nabla = (\partial_1 ,\partial_1^\ast,\ldots,\partial_n,\partial_n^\ast ).$
The Laplacian operator $\Delta = \sum_{j=1}^{n} \partial_j^\ast \partial_j$ is self adjoint nonnegative with square root $A=\Delta^{1/2}$. The $2n$ discrete Riesz transforms are
\[
R_j f = \nabla_j A^{-1}.
\]
Finally the Riesz vector in $\mathcal{Z}^n$ is the collection of the $2n$ operators
$$ \vec{R} = (R_1,R_2,\ldots,R_{2n})= \nabla A^{-1}.$$

\subsection{Main theorem and the \texorpdfstring{$p<2$}{p<2} obstruction}

\begin{theorem}[Dimension-free boundedness for \(p\ge 2\)]
\label{thm:main}
There exists a universal constant \(C>0\) such that for every $2\le p<\infty$,
every \(n\ge 1\), $\Omega = \cube$ or $\Omega=\Z^n$, and every \(f:\Omega\to\R\),
\[
\Big\|\vec{R} f\Big\|_{L^p(\Omega;\ell^2)}
\le C\,(p-1)\,\|f\|_{L^p(\Omega)}.
\]
\end{theorem}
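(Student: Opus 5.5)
We prove the estimate by duality, using the Poisson semigroup $P_t=e^{-tA}$ and a Bellman function of Nazarov--Treil type, with the new ingredient being a two-point inequality that handles the finite differences.

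\textbf{Step 1: duality and the Poisson representation.} Fix $g=(g_1,\dots,g_n)$ with $\|g\|_{L^q(\Omega;\ell^2)}\le 1$, where $q=p/(p-1)\in(1,2]$. It suffices to bound $\sum_j \E\,(R_jf)\,g_j$. Write $A^{-1}=\int_0^\infty P_t\,dt$ on mean-zero functions, and set $u(x,t)=P_tf$ and $v_j(x,t)=P_tg_j$. Integrating by parts in $t$ twice, using $\partial_t^2 P_t=A^2P_t=\Delta P_t$, we aim for the identity
\[
\sum_j \E\,(R_jf)\,g_j \;=\; c\int_0^\infty t\,\E\Big[\sum_j D_j\partial_t u\cdot v_j \;+\;\dots\Big]\,dt .
\]
After rearranging, the right side reduces to $\int_0^\infty t\,\E\big[\sum_j \partial_t D_j u\,\partial_t v_j+\sum_{j,k} D_k D_ju\, D_k v_j\big]\,dt$. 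This is the standard Littlewood--Paley form: a bilinear pairing of the "gradients" $(\partial_t u,\nabla u)$ and $(\partial_t v,\nabla v)$, each mixed with one extra derivative. On $\Z^n$ the same computation holds with $\nabla$ built from $\partial_j,\partial_j^*$.

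\textbf{Step 2: the Bellman function.} Take a function $B(\xi,\eta)$ with $\xi\in\R$ and $\eta\in\R^n$ (or $\ell^2$), in the spirit of \cite{NT96,DV06}, satisfying:
\begin{enumerate}
\item the size bound $0\le B\le C(\abs{\xi}^p+\abs{\eta}^q)$;
\item the convexity bound $-d^2B\ge c\,(p-1)^{-1}\abs{d\xi}\abs{d\eta}$, i.e.\ a lower bound in terms of $\abs{d\xi}\abs{d\eta}$.
\end{enumerate}
We would use the Dragi\v cevi\'c--Volberg function of exactly this form, which gives the constant $p-1\sim p^*$. In the continuous setting one computes $(\partial_t^2-\Delta)B(u,v)$ and integrates $\int_0^\infty t\,\E[\cdot]\,dt$; this is bounded by $\E B(f,g)\lesssim \|f\|_p^p+\|g\|_q^q$, which after homogenisation gives the bound. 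Each step is dimension-free because $B$ acts on the Hilbert-valued variable $\eta$.

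\textbf{Step 3: the two-point inequality (main obstacle).} Discretely, $\Delta B(u,v)$ is not the chain rule $\langle d^2B\,\nabla,\nabla\rangle$. For each edge $(x,x^{(j)})$ (respectively $(x,x\pm e_j)$), what appears is instead the second difference $B(a)-B(b)-\langle\nabla B(a),a-b\rangle$. We symmetrise along edges, pairing $x$ with $x^{(j)}$, so that each edge contributes
\[
B(b)-B(a)-\langle \nabla B(a)+\nabla B(b),\,b-a\rangle/2
\;\text{-type expressions}.
\]
The key claim to prove is the two-point inequality: for any $a=(\xi_1,\eta_1)$ and $b=(\xi_2,\eta_2)$, this symmetrised expression dominates $c(p-1)^{-1}\abs{\xi_1-\xi_2}\,\abs{\eta_1-\eta_2}$, which is the discrete counterpart of the lower bound on $-d^2B$. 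We would prove it by writing the expression as $\int_0^1$ of $-d^2B$ along the segment from $a$ to $b$, weighted by a nonnegative kernel, and then using the convexity bound pointwise. The difficulty is that the kernel can concentrate near an endpoint, while the Dragi\v cevi\'c--Volberg function degenerates near $\eta=0$ or $\xi=0$. We would fix this by adding a small quadratic term or by using the exact $\max$/Burkholder-type form of $B$ to verify the inequality directly. This is where $p\ge2$ must be used: the $\xi$-variable carries the $L^p$ side, and the edge asymmetry (support enlargement, as in Lamberton's example) is harmless only for that orientation.

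\textbf{Step 4: assembling the estimate.} Sum the edge estimates to get
\[
\int t\,\E\Big[\abs{\partial_t u}\abs{\partial_t v}+\sum_j\abs{D_ju}\abs{D_jv}\Big]\,dt\lesssim (p-1)\,\E B(f,g).
\]
The extra derivative in Step 1 must still be matched. We handle it by applying the argument to $B$ evaluated at $(u,\nabla v)$-type vectors, which is the standard device of putting the Riesz vector on the $g$ side, and then using Cauchy--Schwarz in $j$. Finally, scale $f\mapsto\lambda f$ and $g\mapsto g/\lambda$ and optimise over $\lambda$ to obtain $C(p-1)\|f\|_p\|g\|_q$. This gives the stated bound for $\Omega=\cube$, and the same argument, edge by edge, gives it for $\Z^n$.
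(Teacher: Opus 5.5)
Your proposal has genuine gaps: the Riesz representation in Step 1 is wrong, and the two-point inequality in Steps 3–4 controls the wrong quantity.

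\textbf{Step 1 identity.} The form you reduce to, $\int_0^\infty t\,\E\big[\sum_j\partial_tD_ju\,\partial_tv_j+\sum_{j,k}D_kD_ju\,D_kv_j\big]\,dt$, puts three derivatives against the weight $t\,dt$, so it represents a first-order operator. Concretely, take $f=g_j=w_S$ with $j\in S$. The form equals $2|S|\int_0^\infty t\,e^{-2t\sqrt{|S|}}\,dt=\tfrac12$, so it is $\tfrac12\sum_j\E(D_jf)g_j$. By contrast, $\E(R_jw_S)w_S=|S|^{-1/2}$. Your first display, with the ``$\dots$'' deleted and $c=-4$, is already the correct identity: $\sum_j\E(R_jf)g_j=-4\int_0^\infty t\,\E\sum_j D_jP_tf\;\partial_tP_tg_j\,dt$. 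It has one derivative on each function, the finite difference falls on $f$ only, and $g$ carries only a time derivative. Your Step 4 notices an ``extra derivative'', but ``evaluating $B$ at $(u,\nabla v)$-type vectors'' is not a mechanism. No such step is needed once the representation is right.

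\textbf{The two-point inequality targets the wrong quantity.} Your per-edge bound $\gtrsim(q-1)\abs{\xi_1-\xi_2}\abs{\eta_1-\eta_2}$ is in fact true for the correctly symmetrized edge term, by Cauchy--Schwarz on the segment: $\overline{|g|^{2-q}}\cdot\overline{|g|^{q-2}}\ge 1$. However, it only controls spatial--spatial products $\sum_j\abs{D_ju}\abs{D_jv}$, which is a Littlewood--Paley pairing for $\E fg$. The Riesz pairing needs $\abs{\nabla u(x)}\,\abs{\partial_t v(x)}$ at a single vertex, i.e.\ spatial dissipation of $f$ times temporal dissipation of $g$.

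To get that product you must keep the weights:
\begin{enumerate}
\item The edge term dominates $(q-1)\big(|g_1|^{2-q}+|g_2|^{2-q}\big)|f_2-f_1|^2$. This holds because the segment average of $|g|^r$ is at least $(|g_1|^r+|g_2|^r)/(2(r+1))$ for $r=2-q>0$.
\item That edge bound is distributed to vertices as $(q-1)|g(x)|^{2-q}|\nabla f(x)|^2$.
\item The time part supplies $(q-1)|g(x)|^{q-2}|\partial_tg(x)|^2$ pointwise, with no averaging.
\item AM--GM at $x$ then gives $|\nabla f(x)|\,|\partial_tg(x)|$.
\end{enumerate}
This is where $p\ge2$ really enters. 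The negative power $|g|^{q-2}$ cannot be passed from a segment average to an endpoint: take $g_1=\varepsilon$, $g_2=1$, where the average stays below $1/(q-1)$ while $\varepsilon^{q-2}\to\infty$. So spatial differences of $g$ must be discarded, which is exactly why the representation may carry only $\partial_t$ on $g$.

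\textbf{Two smaller errors.}
\begin{itemize}
\item With $0\le B\le C(\abs{\xi}^p+\abs{\eta}^q)$, the dissipation must come from convexity, $d^2B\ge\cdots$, not $-d^2B\ge\cdots$. A nonnegative concave function on $\R^{n+1}$ is constant.
\item The symmetrized edge term is $\tfrac12\langle\nabla B(b)-\nabla B(a),b-a\rangle$, the \emph{uniform} average of $d^2B$ along the segment, so there is no endpoint-concentrating kernel to fight. Your trapezoid-type expression $B(b)-B(a)-\tfrac12\langle\nabla B(a)+\nabla B(b),b-a\rangle$ has the sign-changing kernel $\tfrac12-\theta$ and vanishes identically for quadratic $B$.
\end{itemize}
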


The dimension-free upper bound with linear dependence on $p$ for the Walsh/Hamming-cube Riesz transforms is already implicit in Lust-Piquard’s work and was recorded explicitly in Naor’s formulation; see Lust-Piquard \cite{LP98}, Ben Efraim–Lust-Piquard \cite{BELP07}, and Naor \cite{Na16}. To the best of our knowledge, that this dependence is optimal for the Walsh/Hamming-cube is not known, and this is the object of the following Theorem, proven in Section \ref{sec:sharpness}.

\begin{theorem}[Sharpness of the $p$-dependence on the Hamming cube]
\label{thm:sharpness}
There exists a universal constant $c>0$ such that for every $2\le p<\infty$,
\[
\sup_{N\ge 1}\sup_{0\neq f:\{-1,1\}^N\to\R}
\frac{\|\vec R f\|_{L^p(\{-1,1\}^N;\ell^2_N)}}{\|f\|_{L^p(\{-1,1\}^N)}}
\ge c p.
\]
More precisely, one may take $N$ even with $N\simeq e^{2p}$ and
\[
f_N(x)=\mathbbm{1}_{\{x_1+\cdots+x_N\ge 0\}},
\qquad x\in\{-1,1\}^N.
\]
Thus the upper bound in Theorem~\ref{thm:main} has the correct order of growth in $p$.
\end{theorem}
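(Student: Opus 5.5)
The plan is to bound from below $\|\vec R f_N\|_{L^p(\ell^2)}$ and from above $\|f_N\|_{L^p}$ for the majority indicator $f_N=\mathbbm{1}_{\{S\ge 0\}}$, where $S=x_1+\cdots+x_N$ and $N$ is even. Since $f_N$ is an indicator of a set of measure roughly $1/2$, we have $\|f_N\|_p\simeq 2^{-1/p}\ge 1/2$. So everything rests on a lower bound for $\|\vec R f_N\|_p$ of order $p$. We do not compute $\Delta^{-1/2}f_N$ directly. Instead we use duality and symmetry: $f_N$ is a symmetric function, so $\Delta$ acts on it as the birth–death (Krawtchouk) operator on the variable $k=(N+S)/2$. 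Then $u=\Delta^{-1/2}(f_N-\E f_N)$ is again a function $u(S)$ alone, and $D_j u(x)=\tfrac{1}{2}\bigl(u(S)-u(S-2x_j)\bigr)$.

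Summing over $j$ gives
\[
|\nabla u|^2=\tfrac14\Bigl(\tfrac{N+S}{2}\bigl(u(S)-u(S-2)\bigr)^2+\tfrac{N-S}{2}\bigl(u(S)-u(S+2)\bigr)^2\Bigr).
\]
In the CLT scaling $S=\sqrt N\,y$, the symmetric part of $\Delta$ becomes $\tfrac12$ times the Ornstein--Uhlenbeck operator $L=-\partial_y^2+y\partial_y$, and $|\nabla u|^2\approx \tfrac12 (u'(y))^2$. So $\vec R f_N$ should be compared with the Gaussian Riesz transform $\partial_y L^{-1/2}$ applied to $g=\mathbbm{1}_{\{y\ge0\}}$. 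This suggests the key steps: (i) obtain a one-dimensional formula for $|\vec R f_N|$ in terms of $k$; (ii) estimate it from below in a large-deviation window $|S|\approx t\sqrt N$ with $t\simeq\sqrt p$; (iii) integrate against the binomial tail.

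For (ii) we work in Hermite/Krawtchouk expansions. The function $g$ has Hermite coefficients $c_m\sim m^{-3/4}$ on odd $m$. Its Riesz transform $\sum c_m m^{-1/2}\sqrt m\,H_{m-1}$ behaves like $\sum_m m^{-3/4}H_{m-1}(y)$, which is, up to constants, the tail-type function $e^{y^2/4}|y|^{-1/2}$ for large $|y|$. More robustly, $L^{-1/2}=c\int_0^\infty t^{-1/2}e^{-tL}dt$, and $\partial_y e^{-tL}g(y)=e^{-t}\varphi_{1-e^{-2t}}(e^{-t}y)$, where $\varphi_\sigma$ is the centered Gaussian density of variance $\sigma$. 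This gives the exact expression
\[
\partial_y L^{-1/2}g(y)=c\int_0^\infty t^{-1/2}e^{-t}\,\frac{e^{-e^{-2t}y^2/(2(1-e^{-2t}))}}{\sqrt{2\pi(1-e^{-2t})}}\,dt .
\]
For large $|y|$ this is dominated by $t\approx \log |y|$, and we expect a bound $\gtrsim (\log|y|)^{-1/2}|y|^{-1}$. Then $\int |R g|^p\,d\gamma$ is dominated by the mass near $|y|\sim$ ... Here I notice a mismatch: a polynomially decaying $Rg$ gives only bounded $L^p$ norms. So the linear growth in $p$ must come from the discrete range $|S|\sim N$, and not from the Gaussian regime. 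This is exactly why $N\simeq e^{2p}$ is prescribed.

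The revised plan is therefore as follows. Take $x$ with $S=N-2k$ for small $k$, i.e.\ near the corner. There $\Delta$ restricted to symmetric functions has eigenvalues $m$ with Krawtchouk eigenfunctions, and the discrete semigroup formula $\Delta^{-1/2}=c\int_0^\infty t^{-1/2}e^{-t\Delta}dt$ still applies. Here $e^{-t\Delta}$ is the independent-flip noise with probability $(1-e^{-t})/2$. Then $D_j e^{-t\Delta}f_N(x)$ is, up to a factor $e^{-t}$, the probability that a noisy copy of $x$ lands exactly on the majority boundary after coordinate $j$ is pinned. This is roughly a binomial point probability, which is $\approx N^{-1/2}$ once $t\gtrsim 1$. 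For $x$ at distance $k$ from the corner, most $j$ (about $N$ of them) contribute equally, so $|\vec R f_N(x)|\approx \sqrt N\cdot N^{-1/2}\int t^{-1/2}e^{-t}(\cdots)\,dt$. For $x$ near the corner, the time needed to reach the boundary is $t\approx\log$ of the bias, so the integral is $\gtrsim (\log N)^{1/2}$-ish or better. We then choose the event $\{S\ge N(1-2\delta)\}$ with probability about $e^{-cN\delta\log(1/\delta)}$ and raise to the power $p$, balancing so that $e^{-(\text{rate})/p}$ stays bounded below. I expect the main obstacle to be this precise lower bound on $|\vec R f_N|$ in a moderately atypical region, uniform in $j$. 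Pinning down the rate is exactly the place where $N\simeq e^{2p}$ has to be reproduced, and I would first attempt it via the exact Krawtchouk generating function $\sum_m K_m(k)s^m=(1-s)^k(1+s)^{N-k}$ combined with the subordination integral.
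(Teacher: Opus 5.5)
\textbf{There is a genuine gap.} The revised plan looks for large values of $|\vec R f_N|$ near the corner, but $|\vec R f_N|$ is small there. The noise computation you have in mind gives, for every $x$ and $j$,
\[
D_jH_tf_N(x)=\tfrac{x_j}{2}\,e^{-t}\,\mathbb P(A=-1),\qquad A=\sum_{i\neq j}x_i\eta_i .
\]
Suppose $S=\sum_i x_i\gg\sqrt N$. Then $A$ has mean $e^{-t}(S-x_j)$, so by Hoeffding $\mathbb P(A=-1)$ is negligible until $e^{-t}S\lesssim\sqrt N$, i.e.\ $t\gtrsim\log(S/\sqrt N)$. From then on $\mathbb P(A=-1)\lesssim N^{-1/2}$ and $t^{-1/2}e^{-t}\lesssim\sqrt N/S$. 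Up to constants this gives $|R_jf_N(x)|\lesssim 1/S$ and $|\vec Rf_N(x)|\lesssim\sqrt N/S$. Near the corner this is of order $N^{-1/2}$, not $(\log N)^{1/2}$: the long time needed to reach the boundary is paid for by the factor $e^{-t}$.

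The measure estimate is also off. $\mu(\{S\ge N(1-2\delta)\})=2^{-N}\sum_{k\le\delta N}\binom Nk\le e^{-N/8}$ for $\delta\le1/4$, so its $1/p$-th power is astronomically small when $N\simeq e^{2p}$. More generally, $|\vec Rf_N|\lesssim 1$ on all of $\{|S|\ge\sqrt N\}$, so that whole region contributes only $O(1)$ to $\|\vec Rf_N\|_p$. Neither your large-deviation window $|S|\approx\sqrt{p N}$ nor the corner can produce growth in $p$. Your diagnosis that polynomial decay of $Rg$ gives bounded $L^p$ norms is correct for large $|y|$, but it misses the other end of your own formula.

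\textbf{The missing idea is to evaluate at the jump of $f_N$.} Your Gaussian formula already shows this. At $y=0$ the integrand behaves like $(4\pi)^{-1/2}t^{-1}$ as $t\to0$, so $\partial_yL^{-1/2}g(y)\approx\log(1/|y|)$ for small $|y|$. Moreover $\|\log(1/|y|)\|_{L^p(\gamma)}\approx p/e$, with the $p$-th moment carried by $|y|\approx e^{-p}$. On the cube this singularity is cut off at $|y|\sim N^{-1/2}$, i.e.\ at times $t\sim1/N$. That cutoff, not a corner effect, is why $N\simeq e^{2p}$ is needed.

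The paper makes this direct, with no CLT:
\begin{itemize}
\item Take $x\in B_N=\{S=0\}$ and $x_j=1$. Then $A$ starts at $-1$, and $\{A=-1\}=\{U=V\}$ with independent $U\sim\operatorname{Bin}(m-1,s)$ and $V\sim\operatorname{Bin}(m,s)$.
\item A collision estimate gives $\mathbb P(U=V)\gtrsim(1+Ns)^{-1/2}$. Hence $D_jH_tf_N(x)\gtrsim(Nt)^{-1/2}$ for $16/N\le t\le 1/4$.
\item The heat subordination formula then yields $R_jf_N(x)\gtrsim N^{-1/2}\int_{16/N}^{1/4}dt/t\gtrsim N^{-1/2}\log N$.
\item Summing over the $N/2$ coordinates with $x_j=1$ gives $|\vec Rf_N|\gtrsim\log N$ on $B_N$.
\item Since $\mu(B_N)\gtrsim N^{-1/2}$, one gets $\|\vec Rf_N\|_p\gtrsim\log N\cdot N^{-1/(2p)}$.
\end{itemize}
Choosing $N\simeq e^{2p}$ makes $\log N\ge 2p$ while $N^{-1/(2p)}$ stays bounded below. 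The times that matter are short ($t\lesssim 1$), not the long times $t\approx\log(\text{bias})$ your corner heuristic relies on.
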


\begin{remark}[Sharpness of the $p$-dependence on \texorpdfstring{$\mathbb Z^n$}{Zn}]
\label{rem:sharpness-Zn}

The linear dependence on $p$ is also sharp on $\mathbb Z^n$. This is already visible in dimension one. Indeed, as pointed out in Ba\~nuelos-Kwaniscki \cite{BK19}, it was shown by E. Laeng \cite{La07} that the $L^p$ norm of discrete Hilbert transforms is at least as large as the norm of the continuous Hilbert transform. This means that the dependence in $p$ is at least linear, therefore linear for $p\geq 2$ thanks to Theorem \ref{thm:main} or previous results.

Regarding sharp $L^p$-norm estimates of discrete Hilbert tranforms (not only the linear dependence in $p$), we refer to \cite{La07} for specific $p$'s, to \cite{BK19} for the discrete Hilbert transform introduced by Hilbert and all $p$'s, and to \cite{BK24} for the {R}iesz-{T}itchmarsh Hilbert transform (introduced in \cite{T26}) for even integers {$p$}.
\end{remark}

\begin{proposition}[Failure of Dimension-free boundedness for \(1<p<2\)]
\label{prop:failure}
Let $\Omega = \cube$ or $\Omega=\mathbb{Z}^n$. There is no constant \(C_p\) independent of \(n\) such that, for all $f\in L^p(\Omega)$,
\[
\Big\|\vec{R} f\Big\|_{L^p(\Omega;\ell^2)}
\le C_p\,\|f\|_{L^p(\Omega)}.
\]
\end{proposition}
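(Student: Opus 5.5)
The plan is to argue by duality, so that the failure for $1<p<2$ becomes the failure of the \emph{adjoint} estimate for the conjugate exponent $q=p'>2$. Suppose $\|\vec R f\|_{L^p(\ell^2)}\le C_p\|f\|_{L^p}$ uniformly in $n$. Then the adjoint satisfies
\[
\|\vec R^{\,*}\vec h\|_{L^{q}}\le C_p\|\vec h\|_{L^q(\ell^2)},
\qquad
\vec R^{\,*}\vec h=A^{-1}\sum_j D_j^* h_j .
\]
This holds on the subspace where $A^{-1}$ makes sense, namely mean-zero functions, which suffices here. I will exhibit an explicit $\vec h$ for which the ratio grows like a positive power of $n$. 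This is in the spirit of Lamberton's observation that discrete derivatives enlarge supports: the vector field lives on the $n$ neighbours of a point, while its divergence concentrates a mass of order $n$ at the point itself.

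\emph{Construction on the cube.} Fix $x_0$, write $\delta=\mathbbm 1_{\{x_0\}}$, and set $h_j=-\mathbbm 1_{\{x_0^{(j)}\}}$. A direct check from the definition of $D_j$ gives
\[
D_j\mathbbm 1_{\{x_0^{(j)}\}}=-D_j\delta .
\]
Hence $\sum_j D_j h_j=\sum_j D_j\delta=\Delta\delta$, which is mean zero, and therefore $\vec R^{\,*}\vec h=A^{-1}\Delta\delta=A\delta$. The supports of the $h_j$ are the $n$ distinct neighbours of $x_0$, so $|\vec h|=\mathbbm 1_{\{\text{neighbours}\}}$ and
\[
\|\vec h\|_{L^q(\ell^2)}=(n2^{-n})^{1/q}.
\]

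\emph{Lower bound for $A\delta$.} Using the Walsh expansion $\delta=2^{-n}\sum_S w_S(x_0)w_S$, we get
\[
A\delta(x_0)=2^{-n}\sum_S|S|^{1/2}=\E_S\,|S|^{1/2},
\]
with $|S|\sim\mathrm{Bin}(n,1/2)$. This quantity is $\ge c\sqrt n$ by concentration, or simply by $\E|S|^{1/2}\ge \E|S|\,/\,(\E|S|^{3/2})^{1/3}$-type bounds, or Chebyshev. Keeping only the point $x_0$ gives $\|A\delta\|_{L^q}\ge 2^{-n/q}c\sqrt n$. The ratio is therefore
\[
\frac{\|\vec R^{\,*}\vec h\|_q}{\|\vec h\|_{q}}\ \ge\ c\,n^{1/2-1/q}\longrightarrow\infty ,
\]
since $q>2$. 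This contradicts the assumed uniform bound and proves the proposition on $\cube$.

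\emph{Case of $\Z^n$.} The same scheme applies with $\delta=\mathbbm 1_{\{0\}}$ and $\vec h$ supported on the $2n$ neighbours $\pm e_j$, chosen so that $\nabla^*\vec h=\Delta\delta$. Concretely, $\vec h$ is $-\nabla\delta$ restricted off the origin, with the adjoint identities $\partial_j\mathbbm 1_{\{e_j\}}$ versus $\partial_j\delta$ checked as above. The only point needing care is the appearance of both $\partial_j$ and $\partial_j^*$ in the gradient, and I would verify this identity carefully. Then $\|\vec h\|_{L^q(\ell^2)}\simeq n^{1/q}$ on counting measure. By Fourier analysis,
\[
A\delta(0)=\int_{\mathbb T^n}\Big(\sum_j(2-2\cos\theta_j)\Big)^{1/2}d\theta\ \ge\ c\sqrt n ,
\]
again by concentration of the sum around $2n$. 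The ratio is once more $\gtrsim n^{1/2-1/q}$.

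\emph{Main obstacle.} I expect the delicate step to be the bookkeeping in the identity $\sum_j D_j^*h_j=\Delta\delta$ (especially on $\Z^n$ with the doubled gradient), together with justifying duality on the mean-zero subspace. The analytic lower bound $\E\sqrt{|S|}\gtrsim\sqrt n$ is routine.
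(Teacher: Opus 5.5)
Your proof is correct, and it does not follow the paper's route, because the paper gives no argument. It only attributes the result to Lamberton's example (communicated in \cite{LP98}) and its adaptation to $\Z^n$ in \cite{LP04}, with the heuristic that discrete derivatives can have larger support than the function itself.

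Your dual construction is a clean, self-contained realization of that heuristic. The field $\vec h$ has $|\vec h|=1$ on $n$ (resp.\ $2n$) distinct neighbours, yet the individual divergences all land coherently at the centre. Hence $\vec R^{\,*}\vec h=\pm A\delta$ with $|A\delta(x_0)|\gtrsim\sqrt n$.

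Passing to the adjoint is what makes the argument short. You only need a lower bound for $A\delta$ at a single point and a trivial computation of $\|\vec h\|_{L^q(\ell^2)}$. A primal counterexample for $p<2$ would instead have to bound $\|f\|_{L^p}$ from above for a suitable test function such as $f=A\delta$. That requires finer distributional information on $f$.

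Your route also gives the explicit rate $\|\vec R\|_{L^p\to L^p(\ell^2)}\gtrsim n^{1/p-1/2}$, and it treats $\cube$ and $\Z^n$ uniformly. The duality step is unproblematic in both settings:
\begin{itemize}
\item On the cube, $\sum_j D_jh_j$ is automatically mean zero, and $A^{-1}$ is defined everywhere (it is zero on constants).
\item On $\Z^n$, the $R_k$ are bounded $L^2$ Fourier multipliers, so the adjoint identity holds for finitely supported test functions.
\end{itemize}

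There are two small slips, neither affecting the conclusion.

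First, on $\Z^n$ the field $-\nabla\delta$ restricted off the origin gives $\nabla^*\vec h=-\Delta\delta$, not $+\Delta\delta$. The field $(\mathbbm 1_{\{-e_j\}},\mathbbm 1_{\{e_j\}})_{j}$ gives $+\Delta\delta$, using
\[
\partial_j^*\mathbbm 1_{\{-e_j\}}=-\partial_j\delta,\qquad \partial_j\mathbbm 1_{\{e_j\}}=-\partial_j^*\delta,\qquad \partial_j+\partial_j^*=-\partial_j^*\partial_j .
\]
The sign is irrelevant for the norm estimate.

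Second, the Hölder-type bound $\E X^{1/2}\ge \E X/(\E X^{3/2})^{1/3}$ is false in general. Take $X$ equal to $M$ with probability $\epsilon<1$ and $0$ otherwise. The correct log-convexity bound is $\E X^{1/2}\ge (\E X)^2/\E X^{3/2}$. Your Chebyshev argument for $\E|S|^{1/2}\gtrsim\sqrt n$ suffices anyway.
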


This is a theorem of D.~Lamberton communicated in \cite{LP98} for the case
$\Omega=\cube$, and the example is adapted to the case $\Omega=\mathbb{Z}^n$ in \cite{LP04}.

\section{Preliminary results}

\subsection{Weak representation of the Riesz transforms -- The case \texorpdfstring{$\Omega=\cube$}{of the Hamming cube}}
In this paragraph, we make use of the Walsh system, a convenient tool for identifying functions on the Hamming cube.
For any subset $S\subset \{1,..,n\}$ define the Walsh characters \[w_S(x)=\prod_{j\in S} x_j.\] They satisfy 
\[D_j w_S = \mathbbm{1}_{\{j\in S\}}w_S\] 
and consequently  
\[\Delta w_S = |S|\,w_S.\] 
This is the arising eigenbasis for the self adjoint (compact) number operator  $\Delta$. For \(t\ge 0\) define the Poisson semigroup
\[
P_t \coloneqq e^{-t\sqrt{\Delta}}.
\]
Set \(\Delta^{-1/2}\mathbbm{1}=0\) and on the orthogonal complement of constants define \(\Delta^{-1/2}w_S=|S|^{-1/2}w_S\) for \(S\neq\varnothing\). There hold 
\[
P_t w_S=e^{-t\sqrt{|S|}}w_S,\quad \partial_t P_t w_S=-\sqrt{|S|}e^{-t\sqrt{|S|}}w_S\quad
\]
Setting $\Delta_{x,t} \coloneqq \Delta - \partial_{tt}$, we have immediately
\begin{property}\label{Pt properties Hamming Cube}
    \begin{equation}
        \Delta_{x,t} P_t = 0, \qquad \lim_{t\to\infty}  t \partial_t P_t w_S = 0.
    \end{equation}
\end{property}
The following dualized representation formula for the Riesz vector will enable our proof via Bellman function.
For \(p=2\) the operator \(R:f\mapsto (R_j f)_{j=1}^n\) is an isometry on the mean-zero subspace of \(L^2\), since
\(\sum_{j=1}^n R_j^\ast R_j = I\) on \(\mathbbm{1}^\perp\).
For \(p>2\), let \(q=p/(p-1)\in(1,2)\) and take the supremum over all \(g=(g_1,\ldots,g_n)\) with \(\|g\|_{L^q(\ell^2)}\le 1\),
\[
\Big\|\big(\vec{R}f\big)\Big\|_{L^p(\ell^2)}
=\Big\|\big(R_1 f,\dots,R_n f\big)\Big\|_{L^p(\ell^2)}
=\sup_{\|g\|_{L^q(\ell^2)}\le 1}\Big|\E\sum_{j=1}^n (R_j f)g_j\Big|.
\]
We finally express the product above in terms of derivatives in the upper half space:
\begin{lemma}[Weak representation of the Riesz transforms]
\label{lem:semigroup-identity}
Let \(f:\cube\to\R\) and \(g=(g_1,\dots,g_n):\cube\to\R^n\).
Then
\begin{equation}\label{eq:semigroup-id}
\E \sum_{j=1}^n (R_j f)\,g_j
= -4\int_0^\infty \E \sum_{j=1}^n \big(D_j P_t f\big)\,\big(\partial_t P_t g_j\big)\, t\,dt.
\end{equation}
\end{lemma}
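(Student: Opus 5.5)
The plan is to verify \eqref{eq:semigroup-id} by a direct spectral computation in the Walsh basis. Everything is finite-dimensional on $\cube$, so all sums are finite and exchanging sums, expectations and the $t$-integral is harmless once each integral converges. Expand
\[
f=\sum_{S\subset\{1,\dots,n\}} a_S w_S,\qquad g_j=\sum_{S} b_{j,S}\, w_S ,
\]
and use the orthonormality $\E\, w_S w_T=\mathbbm{1}_{\{S=T\}}$ of the Walsh characters with respect to the uniform measure.

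\textbf{Left-hand side.} From $D_j w_S=\mathbbm{1}_{\{j\in S\}}w_S$ and the definition of $\Delta^{-1/2}$ (with $\Delta^{-1/2}\mathbbm{1}=0$) we get
\[
R_j f=\sum_{S\ni j}|S|^{-1/2}a_S w_S .
\]
The empty set never contains $j$, so the convention for constants plays no role. Hence
\[
\E\sum_j (R_jf)g_j=\sum_j\sum_{S\ni j}|S|^{-1/2}a_Sb_{j,S}.
\]

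\textbf{Right-hand side.} By the formulas preceding Property~\ref{Pt properties Hamming Cube},
\[
D_jP_tf=\sum_{S\ni j}e^{-t\sqrt{|S|}}a_Sw_S,\qquad \partial_tP_tg_j=-\sum_S\sqrt{|S|}\,e^{-t\sqrt{|S|}}b_{j,S}w_S .
\]
Orthonormality then gives
\[
\E\,(D_jP_tf)(\partial_tP_tg_j)=-\sum_{S\ni j}\sqrt{|S|}\,e^{-2t\sqrt{|S|}}a_Sb_{j,S}.
\]
Every $S$ in this sum is nonempty, so $s=\sqrt{|S|}\ge 1$. Consequently $\int_0^\infty t e^{-2ts}\,dt=1/(4s^2)=1/(4|S|)$ converges absolutely. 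Multiplying by $-4$ turns each coefficient into
\[
(-4)\cdot\bigl(-\sqrt{|S|}\bigr)\cdot\frac{1}{4|S|}=|S|^{-1/2},
\]
which matches the left-hand side term by term.

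There is no real obstacle here. The only points needing care are the normalization constant $-4$, which comes from $\int_0^\infty te^{-2ts}\,dt=(2s)^{-2}$ together with the minus sign in $\partial_tP_t$, and noticing that the $S=\varnothing$ mode is annihilated by $D_j$ on both sides. Alternatively, one could integrate by parts in $t$ using $\Delta_{x,t}P_t=0$ and the decay $\lim_{t\to\infty}t\partial_tP_tw_S=0$ from Property~\ref{Pt properties Hamming Cube}. However, the coefficient computation above is shorter and fully rigorous in finite dimension.
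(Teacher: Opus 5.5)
Your proposal is correct and is essentially the paper's proof. The paper also expands in the Walsh basis and checks the identity term by term on each character; you have written out the coefficient computation that the paper leaves implicit, namely $\int_0^\infty t e^{-2t\sqrt{|S|}}\,dt = 1/(4|S|)$ together with the sign from $\partial_t P_t$, and the vanishing of the $S=\varnothing$ mode under $D_j$.
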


\begin{proof}
Expand in the Walsh basis. For \(S\neq\varnothing\), observe 
\(R_j w_S=\mathbbm{1}_{\{j \in  S\}}|S|^{-1/2}w_S\).
A term-by-term computation shows that both sides are well defined and agree on each Walsh character and hence on all functions.
\end{proof}

\subsection{Weak representation of the Riesz transforms -- The case \texorpdfstring{$\Omega=\Z^n$}{of Zn}}

For $t\geq 0$, define again the Poisson semigroup $P_t\coloneqq e^{-t\sqrt{\Delta}}$.
Setting again $\Delta_{x,t} \coloneqq \Delta - \partial_{tt}$, and using Fourier transforms in $L^2(\mathbb{Z}^n)$,
we have
\begin{property}\label{Pt properties Zn}
    \begin{equation}
        \Delta_{x,t} P_t = 0, \qquad \lim_{t\to\infty}  t \partial_t P_t f = 0 \text{ in } L^2(\mathbb{Z}^n).
    \end{equation}
\end{property}
Using Fourier transforms in $L^2(\mathbb{Z}^n)$, we have again
\begin{lemma}[Weak representation of the Riesz transforms]
\label{lem:semigroup-identity bis}
Let \(f:\mathbb{Z}^n\to\R\) and \(g=(g_1,\dots,g_{2n}):\mathbb{Z}^n\to\R^{2n}\).
Denote $(\nabla_1,\nabla_2,\ldots,\nabla_{2n})=(\partial_1,\partial_1^\ast,\ldots,\partial_n,\partial_n^\ast)$.
Then
\begin{equation}\label{eq:semigroup-id bis}
\sum_{x\in\mathbb{Z}^n} \sum_{j=1}^{2 n} (R_j f)\,g_j
= -4 \sum_{x\in\mathbb{Z}^n} \int_0^\infty  \sum_{j=1}^{2 n} \big(\nabla_j P_t f\big)\,\big(\partial_t P_t g_j\big)\, t\,dt.
\end{equation}
\end{lemma}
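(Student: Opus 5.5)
Use the Fourier transform on $\Z^n$, $\hat f(\xi)=\sum_{x}f(x)e^{-i x\cdot\xi}$ for $\xi\in\mathbb{T}^n=[-\pi,\pi)^n$, and reduce the identity to a scalar integral, exactly as in the proof of Lemma~\ref{lem:semigroup-identity}. By density and boundedness, it suffices to take $f$ and $g_j$ finitely supported, or at least in $L^2(\Z^n)$ with $\hat f$ vanishing near $\xi=0$. Both sides are continuous in $L^2$ in the appropriate sense, and we justify this at the end.

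The symbols are as follows. The derivatives have multipliers $\widehat{\partial_j f}(\xi)=(e^{i\xi_j}-1)\hat f(\xi)$ and $\widehat{\partial_j^\ast f}(\xi)=(e^{-i\xi_j}-1)\hat f(\xi)$. The Laplacian has the nonnegative symbol
\[
m(\xi)=\sum_{j=1}^n |e^{i\xi_j}-1|^2=\sum_{j=1}^n 4\sin^2(\xi_j/2).
\]
Hence $P_t$ has multiplier $e^{-t\sqrt{m(\xi)}}$, $\partial_tP_t$ has multiplier $-\sqrt{m}\,e^{-t\sqrt m}$, and $R_j=\nabla_jA^{-1}$ has multiplier $\nu_j(\xi)m(\xi)^{-1/2}$, where $\nu_j$ is the corresponding factor $e^{\pm i\xi_k}-1$. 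By Plancherel (in the real bilinear form, i.e.\ pairing $\hat u$ with $\overline{\hat v}$), the left side of \eqref{eq:semigroup-id bis} equals
\[
(2\pi)^{-n}\int_{\mathbb{T}^n}\sum_{j=1}^{2n}\nu_j(\xi)\,m(\xi)^{-1/2}\hat f(\xi)\overline{\hat g_j(\xi)}\,d\xi .
\]
The right side equals $-4\int_0^\infty(2\pi)^{-n}\int_{\mathbb{T}^n}\sum_j \nu_j e^{-t\sqrt m}\hat f\,\overline{(-\sqrt m e^{-t\sqrt m}\hat g_j)}\,d\xi\,t\,dt$.

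The key scalar computation is, for $\lambda=\sqrt{m}>0$,
\[
4\int_0^\infty \lambda\, e^{-2t\lambda}\,t\,dt=\frac{4\lambda}{(2\lambda)^2}=\lambda^{-1}=m^{-1/2}.
\]
This is the same identity $4\int_0^\infty\sqrt{k}e^{-2t\sqrt k}t\,dt=k^{-1/2}$ used for Walsh characters in Lemma~\ref{lem:semigroup-identity}. Equivalently, it follows by integrating by parts in $t$ using $\partial_{tt}P_t=\Delta P_t$ and the decay in Property~\ref{Pt properties Zn}. The two minus signs cancel, so after exchanging the $t$-integral with the $\xi$-integral and the finite $j$-sum, the two sides agree.

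The main obstacle is justifying Fubini, since $m(\xi)$ vanishes at $\xi=0$ and $\mathbb{T}^n$ is not discrete, unlike the Walsh case. The integrand is bounded in absolute value by
\[
4t\,|\nu_j|\sqrt m\,e^{-2t\sqrt m}\,|\hat f|\,|\hat g_j|.
\]
Its $t$-integral equals $|\nu_j|m^{-1/2}|\hat f||\hat g_j|$, and $|\nu_j|\le\sqrt m$, so it is at most $|\hat f||\hat g_j|$. This is integrable by Cauchy--Schwarz, so Tonelli applies without any assumption near $\xi=0$. Both sides are then bounded by $\|f\|_2\|g\|_{L^2(\ell^2)}$, since $|R_j|$ has multiplier at most $1$. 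This gives continuity, so the identity extends to all $f,g\in L^2$. It holds for $L^p$ data via the finitely supported dense class, with the sum on the left understood for $f,g$ in the class where $\vec Rf$ is defined. The set $\{\xi=0\}$ has measure zero, so the convention for $A^{-1}$ there is immaterial.
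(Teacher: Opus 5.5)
Your proposal is correct and follows the paper's route. The paper's proof consists only of the remark that the identity follows ``using Fourier transforms in $L^2(\mathbb{Z}^n)$''; you carry this out by diagonalizing with the multipliers $\nu_j$, $m^{-1/2}$, $e^{-t\sqrt m}$ and reducing to the same scalar identity $4\int_0^\infty \lambda e^{-2t\lambda}t\,dt=\lambda^{-1}$ as in the Walsh case, and your Tonelli bound via $|\nu_j|\le\sqrt m$ supplies the justification the paper omits.

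The one interchange you leave implicit is $\sum_x\int_0^\infty=\int_0^\infty\sum_x$, needed before applying Plancherel at fixed $t$. It is equally routine: by Cauchy--Schwarz and Plancherel, $\int_0^\infty\sum_x|\nabla_jP_tf|\,|\partial_tP_tg_j|\,t\,dt\le\big(\int_0^\infty\|\nabla_jP_tf\|_2^2\,t\,dt\big)^{1/2}\big(\int_0^\infty\|\partial_tP_tg_j\|_2^2\,t\,dt\big)^{1/2}\le\tfrac14\|f\|_2\|g_j\|_2$.
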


\subsection{Integration by parts and Green's functions}

Let $Q_T$ denote the cylinder $Q_T=\Omega\times(0,T)$, with $\Omega=\cube$ or $\Omega=\Z^n$ and further let
\[
\Delta_{x,t} = \Delta + \left( -\frac{\partial^2}{\partial t^2}\right)
\]
denote the semi-discrete Laplacian, a nonnegative self-adjoint operator.
The Poisson extension $P_tf$ of \(f:\Omega\to\R\) is harmonic with $\Delta_{x,t}P_tf(x)=0$ in $Q_T$. 
Also, it is an exercise to show the integration by parts formula, for \(f:Q_T\to\R\) and \(g:Q_T\to\R\),
\begin{align}
    \mathbb{E}\int_0^T \Delta_{x,t} f \cdot g\ dt - \mathbb{E}\int_0^T f \cdot \Delta_{x,t} g\ dt 
    & = \mathbb{E}( \partial_t f(0) \cdot  g(0) - \partial_t f(T) \cdot  g(T) )  \\
    &  \qquad - \mathbb{E}( \partial_t g(0) \cdot  f(0) - \partial_t g(T) \cdot  f(T) )
\end{align}
We have used the operator $\mathbb{E}$ to denote either the expectation on $\Omega=\cube$
or the summation operator $\mathbb{E}=\sum_{x\in\Omega}$ on $\Omega=Z^n$.

Introduce now the Green's function $G^{(T)}(x,t)=G^{(T)}(t)$ solution to the Poisson problem with nonhomogeneous Dirichlet boundary conditions
\[
\begin{cases}
    \Delta_{x,t} G^{(T)}(x,t) = \delta_T(t), & (x,t)\in Q_{2T}, \\
    G(\cdot,0)=0, \quad G(\cdot,2T)=T, & x\in\Omega,
\end{cases}
\]
where $\delta_T(t)$ is the Dirac distribution at time $T$. The solution is explicitly 
\[
G^{(T)}(x,t)=G^{(T)}(t)=
\begin{cases}
    t, & 0\leq t \leq T, \\
    T, & T \leq t \leq 2T.
\end{cases}
\]
Integrating by parts the distribution $\Delta_{x,t} G^{(T)}$ against a continuous in time function $b(x,t)$ on the cylinder $Q_{T+\varepsilon}$, $0<\varepsilon<T$, yields

\begin{align}
    \mathbb{E} b(x,T) 
        & = \mathbb{E}\int_0^{T+\varepsilon} (\Delta_{x,t} G^{(T)})(x,t) \cdot b(x,t)\ dt \\
        & = \mathbb{E}\int_0^{T+\varepsilon} G^{(T)}(x,t) (\Delta_{x,t}b)(x,t)\ dt 
                + \mathbb{E} b(x,0) +  T\mathbb{E}\partial_t b(x,T+\varepsilon),
\end{align}
and letting $\varepsilon$ go to $0$ ensures
\begin{align} \label{eq: Integrated Bellman T}
    \mathbb{E} b(T)
        & = \mathbb{E}\int_0^{T} (\Delta_{x,t}b)(t)\ t\ dt
                + \mathbb{E} b(0) +  T\mathbb{E}\partial_t b(T).
\end{align}



\section{Bellman function}

From now on we fix \(p>2\) and write \(q=\frac{p}{p-1}\in(1,2)\).
Set
\[
\delta \coloneqq \frac{q(q-1)}{16}.
\]
For \(u,v\ge 0\) use the Nazarov--Treil function
\[\beta(u,v)\coloneqq u^p+v^q+\delta\gamma(u,v)\] with 
\[\gamma(u,v)\coloneqq \,
\begin{cases}
u^2 v^{2-q},& u^p\le v^q,\\[2mm]
\frac{2}{p}u^p+\Big(\frac{2}{q}-1\Big)v^q,& u^p\ge v^q.
\end{cases}
\]

\begin{proposition}
  $u^p + v^q \leqslant \beta (u, v) \leqslant (1 + \delta) (u^p + v^q) .$
\end{proposition}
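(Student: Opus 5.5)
Since $\beta=u^p+v^q+\delta\gamma$ with $\delta>0$, the statement reduces to the two-sided bound
\[
0\le \gamma(u,v)\le u^p+v^q,\qquad u,v\ge 0,
\]
and I will check it separately in the two regions of the definition of $\gamma$.

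\emph{Region $u^p\le v^q$.} Here $\gamma=u^2v^{2-q}\ge 0$ because $q<2$. For the upper bound I write $u^2=(u^p)^{2/p}$ and $v^{2-q}=(v^q)^{(2-q)/q}$. The exponents satisfy
\[
\frac{2}{p}+\frac{2-q}{q}=\frac{2}{p}+\frac{2}{q}-1=1,
\]
since $1/p+1/q=1$. Setting $a=u^p$ and $b=v^q$, this gives $\gamma=a^{2/p}b^{1-2/p}$, a weighted geometric mean of $a$ and $b$. Since $a\le b$ in this region, $\gamma\le b^{2/p}b^{1-2/p}=b\le a+b$. (Young's inequality $a^{\theta}b^{1-\theta}\le \theta a+(1-\theta)b$ would also work.)

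\emph{Region $u^p\ge v^q$.} Here $\gamma=\frac{2}{p}u^p+\left(\frac{2}{q}-1\right)v^q$. Both coefficients are nonnegative: $p>2$ gives $2/p<1$, and $q<2$ gives $2/q-1>0$. Also $\frac{2}{q}-1=1-\frac{2}{p}$. Hence $\gamma\ge 0$, and
\[
\gamma\le \max\!\left(\frac{2}{p},\,1-\frac{2}{p}\right)(u^p+v^q)\le u^p+v^q.
\]

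Multiplying $0\le\gamma\le u^p+v^q$ by $\delta$ and adding $u^p+v^q$ yields both inequalities of the Proposition. Nothing here is a real obstacle; the only point needing care is the exponent identity $\frac{2}{p}+\frac{2-q}{q}=1$ in the first region. In fact the two formulas for $\gamma$ agree on the interface $u^p=v^q$ (both equal $v^q$ there), though continuity is not needed for this proof.
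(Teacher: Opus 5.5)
Your proof is correct and follows essentially the same route as the paper. Nonnegativity of $\gamma$ gives the lower bound, and in each region you show $\gamma\le u^p+v^q$: your bound $a^{2/p}b^{1-2/p}\le b$ is the paper's step $u\le v^{q-1}\Rightarrow u^2v^{2-q}\le v^q$ in different notation, and in the second region you use the same coefficient bounds $2/p\le 1$ and $2/q-1\le 1$.
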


\begin{proof}
This size propery of $\beta$ is in \cite{NT96} and included for convenience.
  Since $2 \leqslant p < \infty$ and $1 < q \leqslant 2$, $\gamma (u, v)
  \geqslant 0$ and thus $\beta (u, v) \geqslant u^p + v^q .$ If $u^p \leqslant v^q$ then $u \leqslant v^{q - 1}$ and thus $u^2 v^{2 - q}
  \leqslant v^q$ and the estimate follows. If $u^p \geqslant v^q$ the estimate follows from the range for $p$ and $q$:
  $\frac{2}{p} \leqslant 1$ and $\frac{2}{q} - 1 \leqslant 1$.
\end{proof}

From {\cite{NT96}} it is known

\begin{proposition}
  The function $\beta$ is in $C^1$ and piecewise in $C^2$.
\end{proposition}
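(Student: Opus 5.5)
The plan is to verify the regularity of $\beta$ by direct differentiation on each of the two regions $\{u^p\le v^q\}$ and $\{u^p\ge v^q\}$ of the open quadrant $u,v>0$. On each region the formula for $\gamma$ is a finite sum of power functions $u^a v^b$, so it is $C^\infty$ in the interior. Hence $\beta$ is $C^2$ on each closed piece, up to the interface where the two formulas meet; this gives the "piecewise $C^2$" claim. The only thing left to prove is that the two formulas match to first order along the interface curve $\Gamma=\{u^p=v^q\}$, equivalently $u=v^{q-1}$, since $q/p=q-1$.

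First I check continuity on $\Gamma$. On the first piece, $u^2v^{2-q}=v^{2(q-1)+2-q}=v^q$. On the second piece,
\[
\tfrac{2}{p}u^p+\big(\tfrac2q-1\big)v^q=\big(\tfrac2p+\tfrac2q-1\big)v^q=v^q,
\]
using $\tfrac1p+\tfrac1q=1$. So the values agree.

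Next I compare gradients. On the first piece $\nabla(u^2v^{2-q})=(2uv^{2-q},\,(2-q)u^2v^{1-q})$. On $\Gamma$ this becomes
\[
2uv^{2-q}=2v^{q-1}v^{2-q}=2v,\qquad (2-q)u^2v^{1-q}=(2-q)v^{q-1}.
\]
On the second piece the gradient is $(2u^{p-1},\,(2-q)v^{q-1})$. On $\Gamma$, $u^{p-1}=v^{(q-1)(p-1)}=v$, because $(p-1)(q-1)=1$. So the first component is again $2v$, and the second components coincide identically. Hence $\nabla\gamma$ is continuous across $\Gamma$. Since $u^p+v^q$ is $C^1$ for $p\ge2>1$ and $q>1$, $\beta\in C^1$ on the open quadrant. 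Continuity of the derivatives up to the axes also holds, with $\partial_v v^q\to0$ as $v\to0$ since $q>1$.

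The main point needing care is the boundary behaviour near the origin and the axes, where the first piece $u^2v^{2-q}$ is used only for $u\le v^{q-1}$. There $\partial_v$ involves $u^2v^{1-q}\le v^{2(q-1)+1-q}=v^{q-1}\to0$, so the derivatives stay bounded and continuous in the region where the formula applies. I would state this explicitly. The second derivatives generally jump across $\Gamma$: for example $\partial_{uu}$ is $2v^{2-q}$ on one side and $2(p-1)u^{p-2}$ on the other. So $C^2$ holds only piecewise, which is exactly what the statement asserts.
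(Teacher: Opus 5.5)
Your argument is correct, but it is not the paper's route: the paper gives no argument for this proposition and simply quotes the regularity of $\beta$ from Nazarov--Treil \cite{NT96}.

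\textbf{What your version adds.}
\begin{itemize}
\item It is self-contained, and it shows that the gluing rests only on the exponent identities $q/p=q-1$ and $(p-1)(q-1)=1$.
\item It agrees with the piecewise table of $\beta_1'$, $\beta_2'$ that the paper writes down in the proof of the next proposition. The same first-order matching on $\{u^p=v^q\}$ can be read off from that table.
\item It makes precise in what sense the statement holds. $\beta$ is $C^1$ up to the axes, with $\beta_1'(0,v)=0$ and $\beta_2'(u,0)=0$. This is exactly what is needed later for $B(s,y)=\beta(\abs{s},\abs{y})$ to be $C^1$ on $\R\times\R^n$.
\item By contrast, ``piecewise $C^2$'' can only hold away from $v=0$, since $\beta_{22}''$ contains $q(q-1)v^{q-2}$. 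This is consistent with the qualifiers ``provided $v\neq 0$'' that the paper attaches to its Hessian bounds.
\end{itemize}

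\textbf{A computation-free alternative for the matching.} Write $h=\frac{2}{p}u^p+\big(\frac{2}{q}-1\big)v^q-u^2v^{2-q}$. Using $\frac{2}{q}-1=1-\frac{2}{p}$ and $(2-q)\frac{p}{p-2}=q$, Young's inequality gives $h\ge 0$ on the open quadrant, with equality exactly when $u^p=v^q$. So $h$ is a smooth function attaining its minimum $0$ along the interface, and its gradient must vanish there.

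This also makes your gluing step immediate. Since $\gamma=u^2v^{2-q}+h\,\mathbbm{1}_{\{u^p>v^q\}}$ and $h$ vanishes to second order on the interface, $\gamma$ is $C^1$ without appealing to a separate patching lemma.
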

In \cite{NT96} several properties have been shown, for instance the convexity. Due to difficulties relating to the discrete spaces, we will need to show specific properties, many of which could be found in the literature, perhaps with slight variations. For readability and self containment the simple calculations are included.

\begin{proposition}
  There hold
  \begin{enumerate}
    \item Provided $v \neq 0$ and $u \neq 0$,
    \[ \beta_{22}'' (u, v) > 0, \beta_{11}'' (u, v) > 0. \]
    \item Provided $v \neq 0$,
    \[ \left( \beta_{11}'' - \frac{{\beta_{12}''}^2}{\beta_{22}''} \right)
       (u, v) \geqslant p (p - 1) u^{p - 2} + \delta v^{2 - q} . \]
    \item Provided $v \neq 0$ and $u \neq 0$,
    \[ \left( \beta_{22}'' - \frac{{\beta_{12}''}^2}{\beta_{11}''} \right)
       (u, v) \geqslant \frac{7}{8} (q - 1) v^{q - 2}, \quad \frac{\beta_2'
       (u, v)}{v} \geqslant (q - 1) v^{q - 2} . \]
  \end{enumerate}
  
\end{proposition}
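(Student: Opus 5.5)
Since $\beta=u^p+v^q+\delta\gamma$ and $\gamma$ is given by two explicit formulas, I will compute the second derivatives separately in the two regions $\{u^p\le v^q\}$ and $\{u^p\ge v^q\}$ and verify each inequality directly in each. On the boundary curve only one-sided second derivatives exist; each inequality follows from the two one-sided versions, since $\beta$ is $C^1$ and piecewise $C^2$ by the preceding proposition.

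\emph{Region $u^p\ge v^q$.} Here $\gamma=\frac2p u^p+(\frac2q-1)v^q$ has no mixed term, so
\[
\beta=\Big(1+\tfrac{2\delta}{p}\Big)u^p+\Big(1+\delta\big(\tfrac2q-1\big)\Big)v^q ,\qquad \beta''_{12}=0 .
\]
Hence $\beta''_{11}=(p+2\delta)(p-1)u^{p-2}>0$ and $\beta''_{22}=(q+\delta(2-q))(q-1)v^{q-2}>0$, which gives (1). For (3), the quantity $\beta''_{22}-(\beta''_{12})^2/\beta''_{11}$ equals $\beta''_{22}\ge q(q-1)v^{q-2}\ge\frac78(q-1)v^{q-2}$, and $\beta'_2/v=(q+\delta(2-q))v^{q-2}\ge (q-1)v^{q-2}$. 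For (2), the quantity $\beta''_{11}-(\beta''_{12})^2/\beta''_{22}$ equals $\beta''_{11}\ge p(p-1)u^{p-2}+2\delta(p-1)u^{p-2}$. I then need $2(p-1)u^{p-2}\ge v^{2-q}$. Since $v\le u^{p/q}=u^{p-1}$, we get $v^{2-q}\le u^{(p-1)(2-q)}=u^{p-2}$, using $(p-1)(2-q)=p-2$. So this holds because $2(p-1)\ge1$.

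\emph{Region $u^p\le v^q$.} Here $\gamma=u^2v^{2-q}$, and
\[
\beta''_{11}=p(p-1)u^{p-2}+2\delta v^{2-q},\quad \beta''_{12}=2\delta(2-q)uv^{1-q},\quad \beta''_{22}=q(q-1)v^{q-2}+\delta(2-q)(1-q)u^2v^{-q}.
\]
The only negative piece is the last term of $\beta''_{22}$. The constraint gives $u\le v^{q-1}$, so $u^2v^{-q}\le v^{q-2}$. Then, using $\delta=q(q-1)/16$ and $2-q\le1$,
\[
\beta''_{22}\ge q(q-1)\big(1-\tfrac{1}{16}\big)v^{q-2}>0 .
\]
For (2), I use $(\beta''_{12})^2\le 4\delta^2u^2v^{2-2q}$ together with $\beta''_{22}\ge \frac{15}{16}q(q-1)v^{q-2}$. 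This gives
\[
\frac{(\beta''_{12})^2}{\beta''_{22}}\le \frac{64\delta^2}{15\,q(q-1)}\,u^2v^{-q} .
\]
Using $u^2v^{-q}\le v^{2-q}\cdot v^{2q-4}u^2$ is awkward, so instead I will bound directly $u^2v^{2-2q}/v^{q-2}=u^2v^{4-3q}$. With $u\le v^{q-1}$ this is at most $v^{2-q}$. So the subtraction is at most $\frac{64\delta^2}{15q(q-1)}v^{2-q}=\frac{4\delta}{15}v^{2-q}\le\delta v^{2-q}$, and what remains is at least $p(p-1)u^{p-2}+\delta v^{2-q}$. For (3), I bound $(\beta''_{12})^2/\beta''_{11}\le 4\delta^2u^2v^{2-2q}/(2\delta v^{2-q})=2\delta u^2v^{-q}\le 2\delta v^{q-2}$. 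Then
\[
\beta''_{22}-\frac{(\beta''_{12})^2}{\beta''_{11}}\ge \big(q(q-1)-\delta-2\delta\big)v^{q-2}=q(q-1)\big(1-\tfrac{3}{16}\big)v^{q-2}\ge\tfrac78(q-1)v^{q-2},
\]
where the last step needs $q\cdot\frac{13}{16}\ge\frac78$. That holds since $q\ge 14/13$ is \dots{} not automatic. So in this step I would keep the term $q$ more carefully, or use $\beta''_{11}\ge p(p-1)u^{p-2}+2\delta v^{2-q}$ jointly; this constant check is the expected main obstacle. Finally, $\beta'_2=qv^{q-1}+\delta(2-q)u^2v^{1-q}\ge qv^{q-1}$, so $\beta'_2/v\ge qv^{q-2}\ge(q-1)v^{q-2}$.

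The delicate point is therefore the numerical constant in (3) for $q$ near $1$. If the crude bound fails there, I would exploit the factor $(q-1)$ versus $q(q-1)$. Specifically, $\delta\le q(q-1)/16$ makes all correction terms $O(q(q-1))$, while the target $\frac78(q-1)$ must be compared with $q(q-1)\cdot\frac{13}{16}$. I expect that a sharper use of $(\beta''_{12})^2/\beta''_{11}\le\delta(2-q)^2\cdot 2u^2v^{-q}$ (keeping the factor $(2-q)^2$) closes the gap, since near $q=1$ one has $q\approx1$ and $\frac{13}{16}<\frac78$ only marginally.
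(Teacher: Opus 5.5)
There is a genuine gap in part (3) on the region $u^p\le v^q$. Everything else is correct and follows the paper's region-by-region computation: all of the region $u^p\ge v^q$, and parts (1) and (2) on $u^p\le v^q$. One small slip: the display $\frac{64\delta^2}{15q(q-1)}u^2v^{-q}$ should read $u^2v^{4-3q}$, which you correct in the next sentence.

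In part (3) on $u^p\le v^q$, your chain only yields $\frac{13}{16}q(q-1)v^{q-2}$. This is smaller than $\frac78(q-1)v^{q-2}$ whenever $q<\frac{14}{13}$, i.e.\ $p>14$, which is exactly the large-$p$ regime that matters. Your proposed remedy does not close this. Near $q=1$ one has $(2-q)^2\to 1$, so keeping that factor in $(\beta''_{12})^2/\beta''_{11}\le 2\delta(2-q)^2u^2v^{-q}$ gains nothing there. With your bound $\beta''_{22}\ge\frac{15}{16}q(q-1)v^{q-2}$ still in place, you get $q(q-1)\bigl(1-\frac{1+2(2-q)^2}{16}\bigr)v^{q-2}$, whose ratio to $(q-1)v^{q-2}$ tends to $\frac{13}{16}<\frac78$ as $q\to1$. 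Using the $p(p-1)u^{p-2}$ part of $\beta''_{11}$ does not rescue the crude bound either. For large $p$ and $u$ slightly below $v^{q-1}$, that part is negligible compared with $2\delta v^{2-q}$, while $u^2v^{-q}$ is close to $v^{q-2}$.

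The slack was lost earlier, when you replaced the negative term $\delta(2-q)(q-1)u^2v^{-q}$ of $\beta''_{22}$ by $\delta v^{q-2}$. Its coefficient carries the factor $q-1$, so it is negligible precisely near $q=1$. The missing step is to keep both negative terms exactly and combine them, as the paper does:
\[
\beta''_{22}-\frac{(\beta''_{12})^2}{\beta''_{11}}\ge q(q-1)v^{q-2}-\delta(2-q)\bigl[(q-1)+2(2-q)\bigr]u^2v^{-q}=q(q-1)v^{q-2}-\delta(2-q)(3-q)u^2v^{-q}.
\]
Now use $u^2v^{-q}\le v^{q-2}$, $\delta=\frac{q(q-1)}{16}$, and $(2-q)(3-q)\le 2$ on $[1,2]$. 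These give the lower bound $\frac78 q(q-1)v^{q-2}\ge\frac78(q-1)v^{q-2}$. Since $(2-q)(3-q)\to 2$ as $q\to1$, this estimate is tight at the endpoint. The constant $\frac78$ leaves no room for any additional loss of size $c\,\delta v^{q-2}$ there, so any crude simplification of the $\beta''_{22}$ term is fatal for large $p$.
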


\begin{proof}
  We compute the derivatives of $\beta$.
  \begin{eqnarray*}
    \beta_1' (u, v) & = & p u^{p - 1} + \delta \left\{ \begin{array}{ll}
      2 u v^{2 - q} & : u^p \leqslant v^q\\
      2 u^{p - 1} & : u^p \geqslant v^q
    \end{array} \right.\\
    \beta_2' (u, v) & = & q v^{q - 1} + \delta \left\{ \begin{array}{ll}
      u^2 (2 - q) v^{1 - q} & : u^p \leqslant v^q\\
      (2 - q) v^{q - 1} & : u^p \geqslant v^q
    \end{array} \right.\\
    \beta_{11}'' (u, v) & = & p (p - 1) u^{p - 2} + \delta \left\{
    \begin{array}{ll}
      2 v^{2 - q} & : u^p \leqslant v^q\\
      2 (p - 1) u^{p - 2} & : u^p \geqslant v^q
    \end{array} \right.\\
    \beta_{22}'' (u, v) & = & q (q - 1) v^{q - 2} + \delta \left\{
    \begin{array}{ll}
      - u^2 (2 - q) (q - 1) v^{- q} & : u^p \leqslant v^q\\
      (2 - q) (q - 1) v^{q - 2} & : u^p \geqslant v^q
    \end{array} \right.\\
    \beta_{12}'' (u, v) & = & \delta \left\{ \begin{array}{ll}
      2 (2 - q) u v^{1 - q} & : u^p \leqslant v^q\\
      0 & : u^p \geqslant v^q
    \end{array} \right.
  \end{eqnarray*}
For {\it{(i)}}: The estimate $\beta_{11}'' (u, v) > 0$ if $v \neq 0$ and $u \neq 0$ is obvious. 
If $u^p \geqslant v^q$ then obviously $\beta_{22}'' (u, v) > 0$ if $v \neq 0$.
  If $u^p \leqslant v^q$ then $u^2 v^{- q} \leqslant v^{q - 2}$ and using
  $\delta = \frac{q (q - 1)}{16}$ yields if $v \neq 0$
  \[ \beta_{22}'' (u, v) \geqslant (q - 1) q \big( 1 - \frac{(2 - q) (q -
     1)}{16} \big) v^{q - 2} > 0. \]
For {\it{(ii)}}:  If $u^p \leqslant v^q$ then {\it{(ii)}} is implied if we show $\delta
  v^{2 - q} \beta_{22}'' (u, v) {\geqslant \beta_{12}''}^2 (u, v)$. But
  using $u^2 v^{2 - 2 q} \leqslant 1$ as well as $\delta = \frac{q (q -
  1)}{16}$,
  \begin{align*}
    \delta v^{2 - q} \beta_{22}'' (u, v) {- \beta_{12}''}^2 (u, v)  = &
    \delta q (q - 1) - \delta^2 (2 - q) ((q - 1) + 4 (2 - q)) u^2 v^{2 - 2
    q}\\
     \geqslant & \delta q (q - 1) - \delta^2 (2 - q) ((q - 1) + 4 (2 - q))\\
     = & \delta q (q - 1) \big( 1 - \frac{1}{16} (2 - q) (- 3 q + 7)
    \big)\\
     \geqslant & 0.
  \end{align*}
  If $u^p \geqslant v^q$ then $u^{p - 2} \geqslant v^{2 - q}$ and since
  $\beta_{12}'' (u, v) = 0$ this implies {\it{(ii)}}.
  
  \noindent{For} {\it{(iii)}}:
  If $u^p \leqslant v^q$ then $u^2 v^{- q} \leqslant v^{q - 2}$. There holds
  $\beta_{11}'' (u, v) \geqslant 2 \delta v^{2 - q}$ and so
  \[ \frac{{\beta_{12}''}^2}{\beta_{11}''} \leqslant 2 \delta (2 - q)^2 u^2
     v^{- q}, \]
  thus
  \begin{align*}
    \big( \beta_{22}'' - \frac{{\beta_{12}''}^2}{\beta_{11}''} \big) (u,
    v)  \geqslant & q (q - 1) v^{q - 2} - (2 - q) (3 - q) \delta u^2 v^{-
    q}\\
     \geqslant & q (q - 1) \big( 1 - \frac{1}{16} (2 - q) (3 - q) \big)
    v^{q - 2}\\
     \geqslant & \frac{7}{8} (q - 1) v^{q - 2}.
  \end{align*}
  Further, $v^{- 1} \beta_2' (u, v) \geqslant q v^{q - 2} \geqslant (q - 1)
  v^{q - 2}$.
  If $u^p \geqslant v^q$ then $\beta_{12}'' (u, v) = 0$ and $\beta_{22}''
  (u, v) = (q - 1) q \left( 1 + \frac{1}{16} (2 - q) (q - 1) \right) v^{q - 2}
  \geqslant (q - 1) v^{q - 2}$. Obviously $v^{- 1} \beta_2' (u, v)
  \geqslant q v^{q - 2} \geqslant (q - 1) v^{q - 2}$.
\end{proof}

\begin{proposition}
\label{propertiesB}
Fix $2\le p<\infty, q=p/(p-1)$. 
For \((s,y)\in\R\times\R^n\) define the Bellman function
\[
B(s,y)\coloneqq \beta\big(\abs{s},\abs{y}\big).
\]
Finally define the weight
\begin{equation}\label{eq:tau-def}
\tau(y)\coloneqq 
\abs{y}^{2-q}
\end{equation} and recall 
\[
\delta \coloneqq \frac{q(q-1)}{16}=\frac{q}{16(p-1)}.
\]

For all \((s,y)\in\R\times\R^n\), there hold
\[
0 \le B(s,y)\le (1+\delta)\big(\abs{s}^p+\abs{y}^q\big).\]

\[d^2B(s,y)\ge \frac{1}{16(p-1)}\left(\begin{array}{ll}\tau(y)&0\\
0&\tau(y)^{-1}I_n
\end{array}
\right) 
\]
\[
B'_{y_i}y_i\ge 0 \; \forall i=1,...,n.
\]

\end{proposition}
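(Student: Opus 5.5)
The size bound is immediate from the previous size proposition applied with $u=\abs{s}$, $v=\abs{y}$, since $\gamma\ge0$. For the monotonicity property, by the chain rule $B'_{y_i}y_i=\beta_2'(\abs{s},\abs{y})\,y_i^2/\abs{y}$ for $y\neq 0$. Here $\beta_2'\ge 0$ on both branches of the formula for $\beta_2'$ ($q<2$, $\delta>0$). At $y=0$ the quantity vanishes, using that $\beta$ is $C^1$ and $\beta_2'(u,v)\to 0$ as $v\to 0$ (on the branch $u^p\le v^q$ one has $u^2v^{1-q}\le v^{q-1}$).

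The main work is the Hessian bound. At points with $s\neq0$, $y\neq0$ off the interface $\abs{s}^p=\abs{y}^q$, I would compute $d^2B$ in the radial/tangential decomposition of $y$. Write $e=y/\abs{y}$ and $\sigma=\operatorname{sign}(s)$. For $h=(h_0,k)\in\R\times\R^n$, split $k=k_e e+k_\perp$. Then
\[
\langle d^2B\,h,h\rangle=\beta_{11}''h_0^2+2\beta_{12}''\sigma h_0k_e+\beta_{22}''k_e^2+\frac{\beta_2'}{\abs{y}}\abs{k_\perp}^2 .
\]
The tangential term is handled by item (iii) of the previous proposition: $\beta_2'/v\ge (q-1)v^{q-2}=(q-1)\tau^{-1}$. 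The $(h_0,k_e)$ block is a $2\times2$ quadratic form with positive diagonal entries by (i). I would bound it below by $a h_0^2+c k_e^2$ by splitting it as
\[
\tfrac12\Big(\beta_{11}''-\tfrac{{\beta_{12}''}^2}{\beta_{22}''}\Big)h_0^2+\tfrac12\Big(\beta_{22}''-\tfrac{{\beta_{12}''}^2}{\beta_{11}''}\Big)k_e^2 .
\]
This works because for a positive $2\times2$ matrix $\begin{pmatrix}a&b\\ b&c\end{pmatrix}$ one has $ax^2+2bxy+cy^2\ge \frac12(a-b^2/c)x^2+\frac12(c-b^2/a)y^2$. Indeed, the difference equals $\frac12(\sqrt{a}x+\frac{b}{\sqrt a}y)^2+\frac12(\sqrt c y+\frac{b}{\sqrt c}x)^2$ after expansion, which I will verify.

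Applying (ii) and (iii) gives the coefficients $\frac12\delta v^{2-q}=\frac{q}{32(p-1)}\tau$ for $h_0^2$ and $\frac{7}{16}(q-1)v^{q-2}$ for $k_e^2$. Since $q\ge1$ we have $\frac{q}{32(p-1)}\ge\frac{1}{32(p-1)}$. The remaining comparison is $q-1=\frac1{p-1}\ge\frac1{16(p-1)}$. The constant $\frac1{32}$ versus the stated $\frac1{16}$ is the point to watch. To recover $\frac1{16}$ I would instead use a weighted split, putting nearly all of the weight on the $h_0$-Schur complement, since the $k_e$ coefficient $\frac78(q-1)$ has a lot of slack relative to $\frac1{16}(q-1)$. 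Concretely, one uses $ax^2+2bxy+cy^2\ge \theta(a-b^2/c)x^2+\ldots$, with the loss on the other variable controlled by (iii). If needed, I would re-expand explicitly on each branch: on $u^p\ge v^q$ the block is diagonal ($\beta_{12}''=0$), so only the branch $u^p\le v^q$ requires a direct check with $\delta=q(q-1)/16$.

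Finally, at the degenerate sets ($s=0$, $y=0$, or the interface) $B$ is only $C^1$ and piecewise $C^2$. There I would interpret $d^2B\ge M$ in the sense that $B$ minus the quadratic form is convex along segments. It suffices to prove the bound on the open pieces where $B$ is $C^2$ and use $C^1$-gluing, which preserves convexity of one-variable restrictions, since $\tau$ is locally comparable to a constant away from $y=0$. I expect these endpoint and regularity issues, together with tracking the exact constant $\frac1{16}$, to be the main obstacles; the algebra on the smooth pieces is routine given the previous proposition.
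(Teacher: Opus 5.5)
Your decomposition is the paper's: $y$ is an eigenvector of $B''_{22}$ with eigenvalue $\beta''_{22}$, the tangential eigenvalue is $\beta'_2/\abs{y}\ge (q-1)\tau^{-1}$, and the $(h_0,k_e)$ block is controlled by the Schur complements (ii) and (iii). The paper records only two one-sided bounds, $\langle d^2B\,h,h\rangle\ge \frac{1}{16(p-1)}\tau h_0^2$ and $\langle d^2B\,h,h\rangle\ge \frac{7}{8(p-1)}\tau^{-1}\abs{k}^2$, and does not say how to combine them. Your averaged inequality is essentially their average, so your $\frac{1}{32(p-1)}$ is what that argument gives verbatim. The factor $2$ is immaterial later, where only $\gtrsim$ is used.

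The step that would fail is your repair by a weighted split. Put weight $\theta$ on the $h_0$-Schur complement. Then (ii) gives $\theta\delta\tau=\theta\frac{q}{16(p-1)}\tau$, which forces $\theta\ge 1/q$. Meanwhile (iii) gives $(1-\theta)\frac78(q-1)\tau^{-1}$, which forces $\theta\le\frac{13}{14}$. These two constraints are incompatible once $p>14$. The slack in $\frac78$ does not help, because the binding side is $h_0$. There, (ii) as stated has only the slack factor $q\to1$, since its proof spends half of $\beta''_{11}\ge 2\delta v^{2-q}$ on absorbing $\beta''_{12}$.

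Your fallback, the direct check on $u^p\le v^q$, does close the gap. Put $w=u/v^{q-1}\in[0,1]$ and $\kappa=\frac{q-1}{16}$, and rescale $h_0$ by $v^{(q-2)/2}$ and $k_e$ by $v^{(2-q)/2}$. Since $u^{p-2}=w^{p-2}v^{2-q}$, the claim becomes
\[
\begin{pmatrix} A-\kappa & b\\ b & C-\kappa\end{pmatrix}\succeq 0,\qquad A=p(p-1)w^{p-2}+2\delta,\quad C=q(q-1)-\delta(2-q)(q-1)w^2,\quad b=2\delta(2-q)w .
\]
Three estimates settle it:
\[
A-\kappa\ge\frac{(q-1)(2q-1)}{16}\ge\frac{q-1}{16},\qquad C-\kappa\ge\frac{59}{64}(q-1),\qquad b^2\le\frac{q^2(2-q)^2(q-1)^2}{64}\le\frac{(q-1)^2}{64}.
\]
The second uses $(2-q)(q-1)\le\frac14$. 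Together they give $(A-\kappa)(C-\kappa)\ge\frac{59}{1024}(q-1)^2\ge b^2$.

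Equivalently, $C\ge\frac{63}{4}\delta$ yields the sharper form $\beta''^2_{12}\le\frac{16}{63}\,\delta v^{2-q}\beta''_{22}$ of (ii). The $h_0$-Schur complement is then at least $\frac{110}{63}\delta v^{2-q}$, and your weighted split works for any $\theta\in[\frac{63}{110},\frac{13}{14}]$.

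On $u^p\ge v^q$ the block is diagonal and the bound is immediate. The rest of your proposal is fine and more careful than the paper: the size bound, $B'_{y_i}y_i=\beta_2'\,y_i^2/\abs{y}\ge 0$, and the $C^1$ gluing across the interface.
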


\begin{proof}
The size estimates follow immediately. To see the estimate on the Hessian, we observe that $B$ is symmetric in $s$ and radial in $y$. This implies for  $s,y\neq 0$  that 
\[B'_1(s,y)=\beta'_1(|s|,|y|)\text{sign}(s), B_{2}'(s,y)=\nabla_2 B(s,y)=\beta'_2(|s|,|y|)\frac{y}{|y|} ,\] 
 \[
  B_{11}''(s,y)=\beta''_{11}(|s|,|y|), B''_{12}(s,y)=\partial_1\nabla_2B(s,y)=\langle \beta''_{12}(|s|,|y|),\frac{y}{|y|}\rangle\text{sign}(s).
 \]
 Having a closer look at the Hessian in variable 2 $d^2B_22=B''_{22}$ yields 
 \[
 B'_{y_i}(s,y)=\beta'_2(|s|,|y|)\frac{y_i}{|y|}.
 \]
 If $i\neq j$ then 
 \[
 B''_{y_iy_j}(s,y)=\beta''_{22}(|s|,|y|)\frac{y_iy_j}{|y|^2}-\beta'_2(|s|,|y|)\frac{y_iy_j}{|y|^3}.
 \]
 If $i=j$ then 
 \[
 B''_{y_i^2}(s,y)=\beta''_{22}(|s|,|y|)\frac{y_i^2}{|y|^2}+\beta'_2(|s|,|y|)\frac{|y|^2-y_i^2}{|y|^3}.
 \]
 Write $\mathbb{R}^n=Y \oplus Y^{\perp}$, where $Y=\text{span}\{y\}$. Computing the $i$th component 
 \[
 (B''_{22}(s,y)y)_i=\beta''_{22}(|s|,|y|)y_i
 \] and for $b\perp y$ we compute 
 \[
 (dB''_{22}(s,y)b)_i=\beta'_2(|s|,|y|)\frac1{|y|}b_i.
 \]
 We deduce that $y$ is Eigenvector of $d^2B_{22}(s,y)$ with Eigenvalue $\beta''_{22}(|s|,|y|)$. Its orthogonal complement is Eigenspace to the Eigenvalue $\beta'_2(|s|,|y|)\frac1{|y|}$. Writing $\xi=\lambda_{\parallel}\frac{y}{|y|}+\xi_{\perp}$ then 
 \[
 \langle B''_{22}(s,y)\xi,\xi \rangle = \beta''_{22}(|s|,|y|)\lambda_{\parallel}^2+\beta'_2(|s|,|y|)\frac1{|y|}|\xi_{\perp}|^2.
 \]
 To obtain an estimate for the full Hessian, we compute 
 \begin{align*}
 \langle d^2B(s,y)
 (\sigma,\xi)^t,(\sigma,\xi)^t
\rangle
 & = B''_{11}(s,y)\sigma^2+2B''_{12}(s,y)\sigma\xi+\langle B''_{22}(s,y)\xi,\xi\rangle \\
 & = \beta''_{11}(|s|,|y|)\sigma^2 +2\beta''_{12}(|s|,|y|) \text{sign}(s)\sigma \lambda_{\parallel} \\
 &
  \qquad\qquad\qquad +\lambda_{\parallel}^2\beta''_{22}(|s|,|y|)+ \beta'_2(|s|,|y|)\frac{|\xi_{\perp}|^2}{|y|}.
 \end{align*}
 Using $\beta''_{22}>0$ and $\beta'_2\ge 0$ and minimizing in $\lambda_{\parallel}$ yields 
 \[
 \langle d^2B(s,y)
 (\sigma,\xi)^t,(\sigma,\xi)^t
\rangle \ge \big( \beta''_{11} - \frac{\beta''^2_{12}}{\beta''_{22}}\big)(|s|,|y|)\sigma^2 \ge p(p-1)|s|^{p-2}\sigma^2+\delta |y|^{2-q}\sigma^2\ge \frac{1}{16(p-1)}|y|^{2-q}\sigma^2.
 \]
 Using $\beta''_{11},\beta''_{22}> 0$ and minimizing in $\sigma$ yields 
 \[
 \langle d^2B(s,y)
 (\sigma,\xi)^t,(\sigma,\xi)^t
\rangle \ge \big( \beta''_{22}-\frac{\beta''^2_{12}}{\beta''_{11}}\big) \lambda^2_{\parallel}+\beta'_2\frac{|\xi_{\perp}|^2}{|y|}\ge \frac{7}{8}(q-1)|y|^{q-2}|\xi|^2\ge \frac{7}{8(p-1)}|y|^{q-2}|\xi|^2.
 \]
 
\end{proof}

\section{Bellman energy and Bilinear estimates}

\subsection{Dissipation of the Bellman energy}
Let successively $v(x,t)\coloneqq(f(x,t),g(x,t))\coloneqq (P_t f(x),P_t g(x))$, and $b(x,t)\coloneqq (B\circ v)(x,t)$.
Using Properties \ref{Pt properties Hamming Cube} and \ref{Pt properties Zn} for the decay properties of the Poisson extensions, using the positivity, boundedness and regularity of the Bellman function,
together with standard approximations and density arguments, we can pass to the limit $T$ goes to infinity
in \eqref{eq: Integrated Bellman T} and get
\begin{align} \label{eq: Integrated Bellman infinity}
        - \mathbb{E}\int_0^{\infty} (\Delta_{x,t}b)(t)\ t\ dt
             \leq     \mathbb{E} b(0) 
\end{align}
Here and in what follows, we assume that the Bellman function is $\mathcal{C}^2$ instead of $\mathcal{C}^1$ and only piecewise $\mathcal{C}^2$. This is justified after the use of standard regularization techniques, where we replace the Bellman function $B$ by a regularized version $B_\eta$ at the cost of a small loss in the estimates. The loss goes to zero as $\eta$ goes to zero. See for example \cite[Lemma 1.2]{DV06} for details and applications to bilinear imbeddings.

Our goal is therefore to estimate $-\mathbb{E}\Delta_{x,t}b(t)$. 
For the contribution of the time derivative, expand simply
\[
\partial^2_t b(x,t) = \nabla B(v(x,t))\cdot \partial^2_t v(x,t) + \langle d^2B(v(x,t)) \partial_t v(x,t),\partial_tv(x,t)\rangle.
\]
By $\Omega$ we denote as before either $\cube$ or $\mathbb{Z}^n$. Observe now that the discrete Laplacian $-\Delta$ on $\Omega$ is (up to a constant in $\cube$) nothing but the graph Laplacian $\Delta^{\mathcal{G}}$ defined below.
The vertices of this graph are the points $x\in\Omega$, and the oriented edges are the couples $(x,y)$ of neighboring vertices.
For any function $v:\Omega\to\R^p$, any vertex $x\in\Omega$, we have
\[
\Delta^{\mathcal{G}} v(x) = \sum_{y\sim x} ( v(y) - v(x) ),
\]
where the sum is taken over all neighboring vertices $y$ of $x$. This means $n$ neighbors in $\cube$ and $2n$ neighbors in $\mathbb{Z}^n$.
Recalling that $b=B\circ v$, we expand
\begin{align}
    (\Delta^{\mathcal{G}} b)(x,t)
        & = \sum_{y\sim x} b(y,t) - b(x,t) \\
        & = \sum_{y\sim x} \nabla B(v(x,t))\cdot (v(y,t)-v(x,t)) \\
        &    \qquad      + \sum_{y\sim x} \int_0^1 \langle d^2B(v(x,y,t,\theta)) (v(y,t)-v(x,t)) , (v(y,t)-v(x,t)) \rangle   (1-\theta) d\theta \\
        & \coloneqq \nabla B(v(x,t)) \cdot (\Delta v)(x,t) \quad + \quad \sum_{y\sim x} \alpha(x,y).
\end{align}
where we have noted $v(x,y,t,\theta)=(1-\theta) v(x) + \theta v(y)$.
In order to estimate the contribution of the terms $\alpha(x,y)$ associated to each oriented edge, we symmetrize, after summing over $\Omega$,
\[
\sum_{x\in\Omega}\sum_{y\sim x} \alpha(x,y)
    = \sum_{(x,y)} \alpha(x,y) + \alpha(y,x)
    \coloneqq \sum_{(x,y)} 2 \bar{\alpha}(x,y)
    = \sum_{x\in\Omega}\sum_{y\sim x} \bar{\alpha}(x,y),
\]
where $\bar{\alpha}(x,y)=\bar{\alpha}(y,x)$ is the average of $\alpha(x,y)$ and $\alpha(y,x)$.
It is easy to check that
\begin{align}
\bar{\alpha}(x,y)
    & = \frac{1}{2}\int_0^1 \Big\langle d^2B(v(x,y,t,\theta)) \ (v(y,t)-v(x,t)) , (v(y,t)-v(x,t)) \Big\rangle (1-\theta+\theta) d\theta \\
    & = \frac{1}{2} \Big\langle  \Big(\int_0^1d^2B(v(x,y,t,\theta))d\theta \Big)  \ (v(y,t)-v(x,t)) , (v(y,t)-v(x,t)) \Big\rangle \\
    & = \frac{1}{2} \Big\langle  \overline{d^2B}(v(x,t),v(y,t)) \ (v(y,t)-v(x,t)) , (v(y,t)-v(x,t)) \Big\rangle,
\end{align}
that is only the \emph{uniform} average $\overline{d^2B}(v_1,v_2)$ of the Hessian $d^2B(v)$ on edges $(v_1,v_2)$ contributes to the dissipation of the Bellman function.
Collecting time and space derivatives, and using that $\Delta_{x,t}v=0$, we finally get
\begin{align}
-\sum_{x\in\Omega}\Delta_{x,t}b (x,t)
& = \frac{1}{2} \sum_{x\in\Omega}
        \sum_{y\sim x}   \langle \overline{d^2B}(v(x,t),v(y,t)) \ (v(y,t)-v(x,t)) , (v(y,t)-v(x,t)) \rangle \label{eq: Delta b} \\
& \qquad    + \sum_{x\in\Omega}  \langle d^2B(v(x,t)) \partial_t v(x,t),\partial_tv(x,t)\rangle. \label{eq: dt b}
\end{align}

\subsection{Two-point inequality}\label{two2}

As usual, the bilinear imbedding discussed in the next section is a consequence of the dissipation properties of the Bellman function.
Proposition \ref{propertiesB} provides the pointwise bounds for the Hessian:
\[
\langle d^2 B(v) dv,dv \rangle \gtrsim (q-1)  \big( |g|^{2-q} |df|^2 +  |g|^{q-2} |dg|^2 \big).
\]
However, we have seen above that in the case of discrete derivatives we need to estimate the dissipation properties of averages of the type $\overline{d^2 B}(v_1,v_2)$.
For $dv=v_2-v_1$ given, averaging over the segment $(v_1,v_2)$ the inequality above yields immediately
\[
\langle \overline{d^2 B}(v_1,v_2) (v_2-v_1),(v_2-v_1) \rangle
\gtrsim (q-1) \big( \overline{\alpha}(g_1,g_2) |f_2-f_1|^2 + \overline{\beta}(g_1,g_2) |g_2-g_1|^2 \big),
\]
with $\alpha(g) = |g|^{2-q}$ and $\beta(g) = |g|^{q-2}$. We will need
\begin{lemma}[Segment average of powers]\label{lem:segment-average}
Let \(r > 0\) and \(a,b\in\R^m\). Then
\[
\int_0^1 \norm{(1-t)a+tb}^r\,dt \;\ge\; \frac{\norm{a}^r+\norm{b}^r}{2(r+1)}.
\]
\end{lemma}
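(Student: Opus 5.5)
We need $\int_0^1 \|(1-t)a+tb\|^r\,dt \ge \frac{\|a\|^r+\|b\|^r}{2(r+1)}$. The idea is to compare the segment with its endpoint that has the larger norm: near that endpoint the integrand is comparable to the endpoint value, by the triangle inequality.

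By the symmetry $t\mapsto 1-t$, which swaps $a$ and $b$, we may assume $\|a\|\ge\|b\|$. Then $\|a\|^r+\|b\|^r\le 2\|a\|^r$, so it suffices to prove
\[
\int_0^1 \norm{(1-t)a+tb}^r\,dt \ge \frac{\norm{a}^r}{r+1}.
\]

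For each $t\in[0,1]$, the triangle inequality together with $\|b\|\le\|a\|$ gives
\[
\norm{(1-t)a+tb}\ge (1-t)\norm{a}-t\norm{b}\ge (1-2t)\norm{a}.
\]
For $t\in[0,\tfrac12]$ the right-hand side is nonnegative, so raising to the power $r>0$ preserves the inequality there. On $[\tfrac12,1]$ we simply bound the integrand below by $0$. This yields
\[
\int_0^1 \norm{(1-t)a+tb}^r\,dt\ \ge\ \norm{a}^r\int_0^{1/2}(1-2t)^r\,dt=\frac{\norm{a}^r}{2(r+1)}.
\]

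This falls short of the target $\|a\|^r/(r+1)$ by a factor of $2$. The bound $(1-2t)\|a\|$ is sharp when $b=-a$, and in that case the integrand $|1-2t|^r\|a\|^r$ is symmetric about $t=\tfrac12$, so discarding $[\tfrac12,1]$ loses exactly half. The fix is to use both endpoints instead of only $a$.

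Applying the same argument from the $b$ endpoint gives $\|(1-t)a+tb\|\ge t\|b\|-(1-t)\|a\|$. This lower bound is weak when $\|b\|$ is small, but then the right-hand side of the lemma is also dominated by $\|a\|^r$. Define
\[
h(t)=\max\bigl(0,\,(1-t)\norm{a}-t\norm{b},\,t\norm{b}-(1-t)\norm{a}\bigr)=\bigl|(1-t)\norm{a}-t\norm{b}\bigr|.
\]
Thus $\|(1-t)a+tb\|\ge h(t)$, which is the one-dimensional reduction: collinear vectors pointing in opposite directions. Write $\alpha=\|a\|$ and $\beta=\|b\|$. The function $h$ vanishes at $t_0=\alpha/(\alpha+\beta)$ and is linear on each side, so
\[
\int_0^1 h^r\,dt=\frac{t_0\,\alpha^r+(1-t_0)\,\beta^r}{r+1}=\frac{\alpha^{r+1}+\beta^{r+1}}{(r+1)(\alpha+\beta)}.
\]

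It remains to show
\[
\frac{\alpha^{r+1}+\beta^{r+1}}{\alpha+\beta}\ \ge\ \frac{\alpha^r+\beta^r}{2},
\]
which is equivalent to $2(\alpha^{r+1}+\beta^{r+1})\ge(\alpha+\beta)(\alpha^r+\beta^r)$. Expanding, this becomes $(\alpha-\beta)(\alpha^r-\beta^r)\ge 0$, which holds because $x\mapsto x^r$ is increasing (Chebyshev's sum inequality).

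The main obstacle is recognizing that the single-endpoint argument loses a factor of $2$ and must be replaced by the two-sided bound $\bigl|(1-t)\|a\|-t\|b\|\bigr|$. Once that replacement is made, the explicit integral and the final inequality are routine.
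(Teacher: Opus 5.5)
Your final argument is correct and is exactly the paper's proof: the reverse-triangle bound $\|(1-t)a+tb\|\ge |(1-t)\|a\|-t\|b\||$, the exact integral $\frac{\alpha^{r+1}+\beta^{r+1}}{(r+1)(\alpha+\beta)}$, and then $(\alpha-\beta)(\alpha^r-\beta^r)\ge 0$. You should delete the opening reduction to $\int_0^1\|(1-t)a+tb\|^r\,dt\ge \|a\|^r/(r+1)$ for $\|a\|\ge\|b\|$, because that stronger claim is false and your final argument never uses it; for instance, $b=-\varepsilon a$ with $0<\varepsilon<1$ gives the integral $\frac{1+\varepsilon^{r+1}}{1+\varepsilon}\cdot\frac{\|a\|^r}{r+1}<\frac{\|a\|^r}{r+1}$.
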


\begin{proof}
By the reverse triangle inequality,
\(\norm{(1-t)a+tb}\ge \big|(1-t)\norm{a}-t\norm{b}\big|\).
Thus it suffices to prove the scalar case with \(A=\norm{a}\), \(B=\norm{b}\):
\[
\int_0^1 \big|(1-t)A-tB\big|^r dt \ge \frac{A^r+B^r}{2(r+1)}.
\]
A direct computation (splitting at the zero of \((1-t)A-tB\)) yields
\[
\int_0^1 \big|(1-t)A-tB\big|^r dt
= \frac{A^{r+1}+B^{r+1}}{(r+1)(A+B)}.
\]
Finally,
\[
\frac{A^{r+1}+B^{r+1}}{A+B}\ge \frac{A^r+B^r}{2}
\iff A^{r+1}+B^{r+1}\ge A^rB+AB^r,
\]
which holds since \((A-B)(A^r-B^r)\ge 0\) for \(r>0\).
\end{proof}

With $0 \leq r=2-q \leq 1$, we can estimate $\overline{\alpha}(g_1,g_2) \geq \frac{1}{4} (|g_1|^{2-q} +  |g_2|^{2-q}) $.
Discarding the nonnegative contribution of $\overline{\beta}(g_1,g_2) |g_2-g_1|^2$ yields immediately
\begin{corollary}[Two-point inequality]\label{Cor: two point inequality} Let $p\geq 2$. We have
\[
\langle \overline{d^2 B}(v_1,v_2) (v_2-v_1),(v_2-v_1) \rangle
    \gtrsim (q-1)  \big( |g_1|^{2-q} +  |g_2|^{2-q}  \big) |f_2 - f_1|^2.
\]
\end{corollary}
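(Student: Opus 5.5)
The plan is to integrate the pointwise Hessian bound of Proposition \ref{propertiesB} along the segment $[v_1,v_2]$ and then bound the resulting average weight from below with Lemma \ref{lem:segment-average}. Write $v_i=(f_i,g_i)\in\R\times\R^n$ and $dv=v_2-v_1=(f_2-f_1,g_2-g_1)$, and let $v_\theta=(1-\theta)v_1+\theta v_2=(f_\theta,g_\theta)$. By definition,
\[
\langle \overline{d^2B}(v_1,v_2)\,dv,dv\rangle=\int_0^1\langle d^2B(v_\theta)\,dv,dv\rangle\,d\theta .
\]

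First I apply the matrix inequality of Proposition \ref{propertiesB} at each point $v_\theta$ with $g_\theta\neq 0$:
\[
\langle d^2B(v_\theta)\,dv,dv\rangle\ \ge\ \frac{1}{16(p-1)}\Big(|g_\theta|^{2-q}|f_2-f_1|^2+|g_\theta|^{q-2}|g_2-g_1|^2\Big)\ \ge\ \frac{1}{16(p-1)}\,|g_\theta|^{2-q}|f_2-f_1|^2 .
\]
The second inequality simply drops the nonnegative $g$-term. Since $q-1=1/(p-1)$, the constant is $\tfrac{1}{16}(q-1)$.

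Next I deal with the points where $g_\theta=0$, and with the fact that $B$ is only piecewise $C^2$. On the segment, either $g_\theta$ vanishes at no more than one value of $\theta$, or it vanishes identically (when $g_1=g_2=0$). In the first case the exceptional set has measure zero and does not affect the $\theta$-integral. In the second case the right-hand side of the claim is $0$, and convexity of $B$ makes the left-hand side nonnegative. The only real subtlety is here: near $g=0$ the Hessian is singular, since $|g|^{q-2}$ blows up. I would handle it either through the regularization $B_\eta$ invoked earlier in the paper, passing to the limit $\eta\to 0$ by Fatou, or by noting that $\theta\mapsto B(v_\theta)$ is $C^1$ and piecewise $C^2$ with a nonnegative second derivative off a null set, so the averaged Hessian is the distributional second derivative and dominates the a.e.\ integral.

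Finally I integrate in $\theta$ and apply Lemma \ref{lem:segment-average} with $r=2-q\in(0,1]$, $a=g_1$, $b=g_2$:
\[
\int_0^1|g_\theta|^{2-q}\,d\theta\ \ge\ \frac{|g_1|^{2-q}+|g_2|^{2-q}}{2(3-q)}\ \ge\ \frac14\big(|g_1|^{2-q}+|g_2|^{2-q}\big).
\]
For the degenerate value $q=2$, i.e.\ $r=0$, the integral equals $1$ directly and the same bound holds. Combining the steps gives the Corollary with implicit constant $1/64$. I expect no serious obstacle beyond the regularity justification in the second paragraph.
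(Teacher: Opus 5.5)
Your proposal is correct and follows the paper's argument. Like the paper, you average the pointwise Hessian lower bound of Proposition~\ref{propertiesB} over the segment, discard the $|g|^{q-2}|g_2-g_1|^2$ term, and apply Lemma~\ref{lem:segment-average} with $r=2-q$ to get the factor $\tfrac14$. Your extra care about the null set where $g_\theta=0$ and the piecewise-$C^2$ regularity is fine; the paper handles this only by invoking the regularization $B_\eta$. One small nit: when $g_1=g_2=0$ and $q=2$ the right-hand side is not $0$, but then $B(s,y)=(1+\delta)s^2+|y|^2$ is quadratic and the bound is immediate.
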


\subsection{Bilinear imbedding}

We turn now to the bilinear imbedding. Our goal is to prove
\begin{theorem}
\label{T: bilinear imbedding}
Assume $p\geq 2$. Then
\[
\mathbb{E}\int_0^\infty \|  \nabla_{x,t} f \| \|  \nabla_{t} g \| t dt
    \lesssim
    (p-1) \| f\|_p   \|g\|_q
\]
\end{theorem}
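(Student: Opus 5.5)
The plan is the standard Bellman argument. I would bound the dissipation $-\sum_x \Delta_{x,t} b$ from below pointwise in $x$ by a multiple of $\frac{1}{p-1}\,\|\nabla_{x,t} f\|\,\|\partial_t g\|$, then integrate against $t\,dt$ using \eqref{eq: Integrated Bellman infinity}, and finally homogenize. Here $f,g$ stand for the Poisson extensions $P_tf, P_tg$, and $\|\nabla_{x,t}f\|^2=\sum_j |\nabla_j f|^2+|\partial_t f|^2$. Since $\|\nabla_{x,t} f\|\le \|\nabla_x f\|+|\partial_t f|$, it suffices to treat the two products $\|\nabla_x f\|\,|\partial_t g|$ and $|\partial_t f|\,|\partial_t g|$ separately.

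\textbf{Time part.} By Proposition~\ref{propertiesB}, the term \eqref{eq: dt b} is at least
\[
\frac{1}{16(p-1)}\big(\tau(g)|\partial_t f|^2+\tau(g)^{-1}|\partial_t g|^2\big),
\]
with $\tau(g)=|g|^{2-q}$. I would split this as half plus half. One half gives, by AM--GM, $\frac{1}{16(p-1)}|\partial_t f|\,|\partial_t g|$. I keep the other half, $\frac{1}{32(p-1)}\tau(g(x))^{-1}|\partial_t g(x)|^2$, in reserve for the space part. The points where $g=0$ are handled by the regularization $B_\eta$ already invoked in the paper, or simply by noting that the inequality is then trivial.

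\textbf{Space part.} This is where the discreteness enters, and I expect it to be the main obstacle. The term \eqref{eq: Delta b} is a sum over oriented edges of the edge-averaged Hessian quadratic form. By Corollary~\ref{Cor: two point inequality}, each edge contributes at least
\[
c(q-1)\big(|g(x)|^{2-q}+|g(y)|^{2-q}\big)|f(y)-f(x)|^2 \ \ge\ c(q-1)\,|g(x)|^{2-q}|f(y)-f(x)|^2 .
\]
The key point is that the two-point inequality lets me attach the weight at the \emph{base point} $x$ of the edge, rather than at an intermediate point of the segment. Summing over $y\sim x$ then gives $c(q-1)\,\tau(g(x))\,\|\nabla_x f(x)\|^2$. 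On the cube this holds up to the factor coming from $D_j f(x)=\frac{f(x)-f(x^{(j)})}{2}$. On $\Z^n$ the $2n$ neighbours correspond exactly to $\partial_j$ and $\partial_j^\ast$, so the sum is precisely $\|\nabla f(x)\|^2$.

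Next I pair this with the reserved term $\frac{1}{32(p-1)}\tau(g(x))^{-1}|\partial_t g(x)|^2$ at the same point $x$. Using $q-1=\frac{1}{p-1}$ and AM--GM, the sum of the two is at least $\frac{c'}{p-1}\|\nabla_x f(x)\|\,|\partial_t g(x)|$. This step only works because both weights are evaluated at the same point $x$, which is exactly what the symmetrization in \eqref{eq: Delta b} combined with the Corollary provides.

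\textbf{Conclusion.} Combining the two parts and using \eqref{eq: Integrated Bellman infinity} together with the size estimate of Proposition~\ref{propertiesB}, I would obtain
\[
\frac{c}{p-1}\,\E\int_0^\infty \|\nabla_{x,t}f\|\,|\partial_t g|\,t\,dt \;\le\; \E B(f,g)\;\le\;(1+\delta)\big(\|f\|_p^p+\|g\|_q^q\big),
\]
where $\delta\le 1$. The left-hand side is invariant under $f\mapsto\lambda f$, $g\mapsto\lambda^{-1}g$. Choosing $\lambda$ so that $\lambda^p\|f\|_p^p=\lambda^{-q}\|g\|_q^q$ turns the right-hand side into $2(1+\delta)\,\|f\|_p\|g\|_q$. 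This gives the stated bound with the factor $p-1$, the constants being absolute.
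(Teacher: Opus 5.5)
Your proposal is correct and follows the paper's proof essentially step for step: the Hessian bound of Proposition~\ref{propertiesB} for the time term, the two-point inequality with the weight moved to the base vertex $x$ for the edge term, AM--GM pairing against $\tau(g)^{-1}|\partial_t g|^2$ at the same point, the integrated Bellman inequality, and homogenization. The differences are only cosmetic. You split the reserved term and use $\|\nabla_{x,t}f\|\le\|\nabla_x f\|+|\partial_t f|$, whereas the paper applies a single AM--GM to $\tau(g)|\nabla_{x,t}f|^2+\tau(g)^{-1}|\partial_t g|^2$; and your explicit scaling $f\mapsto\lambda f$, $g\mapsto\lambda^{-1}g$ is exactly what the paper calls the ``good $\lambda$'' step.
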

Here we note $\nabla_{x,t}f(x,t) \coloneqq (\nabla,\partial_t)f(x,t)$ the collection of all space and time derivatives of $f$ at the point $(x,t)$. The vector of space derivatives $\nabla f(x,t)$ is the collection of all increments along edges neigboring $x$, that is $\nabla f(x,t) = ( f(y,t) - f(x,t) )_{y\sim x}$.
Notice that in the Theorem above discrete derivatives are only present for the the function $f$, whose $L^p$ norm, $p\geq 2$, is used, whereas we can only estimate time derivatives of the function $g$ as soon as $q<2$. This is in contrast with the continuous case (see e.g. \cite{DV06}) where the full gradients of both $f$ and $g$ can be estimated.

\begin{proof}[Proof of Theorem \ref{T: bilinear imbedding}]

We estimate now $-\mathbb{E}\Delta_{x,t}b(t)$ as written in \eqref{eq: Delta b}-\eqref{eq: dt b}.
Regarding the contributions of the continuous time derivatives in \eqref{eq: dt b}, we simply use the dissipation estimates of the Bellman function to get

\begin{align}
    \eqref{eq: dt b}
    & = \sum_{x\in\Omega}  \langle d^2B(v(x,t)) \partial_t v(x,t),\partial_tv(x,t)\rangle \\
    & \gtrsim (q-1)\sum_{x\in\Omega} |g(x,t)|^{2-q} (\partial_t f)^2(x,t) + |g(x,t)|^{q-2} (\partial_t g(x,t))^2(x,t).
\end{align}

For the contributions of the discrete space derivatives in \eqref{eq: Delta b}, we use the two-point inequality from Corollary \ref{Cor: two point inequality} and distribute every term $(|g(x,t)|^{2-q} + |g(y,t)|^{2-q}) (f(y,t) - f(x,t))^2$ onto their respective vertices $x$ and $y$ to get

\begin{align}
    \eqref{eq: Delta b}
    & = \frac{1}{2} \sum_{x\in\Omega}
        \sum_{y\sim x}   \langle \overline{d^2B}(v(x,t),v(y,t)) \ (v(y,t)-v(x,t)) , (v(y,t)-v(x,t)) \rangle \\
    & \gtrsim (q-1) \sum_{x\in\Omega} \sum_{y\sim x}
    (|g(x,t)|^{2-q} + |g(y,t)|^{2-q}) (f(y,t) - f(x,t))^2 \\
    & \gtrsim (q-1) \sum_{x\in\Omega} \sum_{y\sim x}
    |g(x,t)|^{2-q} (f(y,t) - f(x,t))^2 \\
    & \approx (q-1) \sum_{x\in\Omega} |g(x,t)|^{2-q} |\nabla f(x,t)|^2.
\end{align}

This allows us to estimate
\begin{align}
-\mathbb{E}\int_0^\infty \Delta_{x,t}b(x,t)\ t dt
& \geq \gtrsim (q-1) \mathbb{E} \int_0^\infty |g(x,t)|^{2-q} |\nabla_{x,t} f(x,t)|^2 + |g(x,t)|^{q-2} (\partial_t g)^2(x,t)  \ tdt \\
& \gtrsim (q-1) \mathbb{E} \int_0^\infty |\nabla_{x,t} f(x,t)| |\partial_t g(x,t)| \ t dt
\end{align}
where we used $ |g(x,t)|^{2-q} + |g(x,t)|^{q-2} \geq 2$.

Finally, recalling that 
\[
-\mathbb{E}\int_0^\infty \Delta_{x,t}b(x,t)\ t dt
\leq \mathbb{E}b(0)
\leq (1+\delta) ( \|f\|_p^p + \|g\|_q^q )
\leq 2 ( \|f\|_p^p + \|g\|_q^q ),
\]
we get
\[
\mathbb{E} \int_0^\infty |\nabla_{x,t} f(x,t)| |\partial_t g(x,t)| \ t dt
\lesssim \frac{1}{q-1} ( \|f\|_p^p + \|g\|_q^q )
\lesssim (p-1) \|f\|_p \|g\|_q,
\]
after using $ q-1 = (p-1)^{-1}$, and a good $\lambda$ inequality.
\end{proof}

\subsection{Proof of Theorem \ref{thm:main}}
We now collect easily
\begin{align}
    \Big| \mathbb{E} \big(\vec{R} f \cdot \vec{g} \big) \Big|
    & = 4 \Big| \sum_j \sum_x\int_0^\infty \partial_j f(x,t)\ \partial_t g_j \ t dt \Big| \\
    & \leq 4 \mathbb{E} \int_0^\infty \Big(\sum_j (\partial_j f)^2(x,t)\Big)^{1/2}
                                \Big(\sum_j (\partial_t g_j)^2(x,t)\Big)^{1/2}  \ t dt \\
    & \leq 1024 (p-1) \|f\|_{L^p} \|\vec{g}\|_{L^q(\ell^2)}
\end{align}
where we used successively the weak representation of the Riesz transforms, Cauchy-Schwarz, and the bilinear imbedding. This concludes the proof of Theorem \ref{thm:main}.

\section{Sharpness of the linear \texorpdfstring{$p$}{p}-dependence on the Hamming cube}
\label{sec:sharpness}

In this section we prove Theorem~\ref{thm:sharpness}. The example is the majority function. Throughout the section the dimension of the cube is denoted by $N$ rather than $n$.

Let
\[
\Omega_N=\{-1,1\}^N,
\qquad
\mu_N=\text{the uniform probability measure on }\Omega_N .
\]
The Walsh number operator on $\Omega_N$ is still denoted by $\Delta$, and its heat semigroup by
\[
H_t=e^{-t\Delta},\qquad t\ge0.
\]
On Walsh characters $w_S(x)=\prod_{i\in S}x_i$ we have
\[
H_t w_S=e^{-t|S|}w_S,
\qquad
D_jw_S=\mathbbm{1}_{\{j\in S\}}w_S.
\]
Therefore, for every function $f:\Omega_N\to\R$,
\begin{equation}\label{eq:sharp-heat-representation}
R_j f
=
D_j\Delta^{-1/2}f
=
\frac1{\sqrt\pi}\int_0^\infty t^{-1/2}D_jH_tf\,dt.
\end{equation}
Indeed, this follows term by term from the identity
\[
\lambda^{-1/2}=\frac1{\sqrt\pi}\int_0^\infty t^{-1/2}e^{-t\lambda}\,dt,
\qquad \lambda>0,
\]
and the constant Walsh component is killed by $D_j$.

We also use the standard noise representation of $H_t$:
\begin{equation}\label{eq:sharp-noise-representation}
H_tf(x)=\mathbb E_\eta f(x_1\eta_1,\ldots,x_N\eta_N),
\end{equation}
where $\eta_1,\ldots,\eta_N$ are independent signs with
\[
\mathbb P(\eta_i=1)=\frac{1+e^{-t}}2,
\qquad
\mathbb P(\eta_i=-1)=\frac{1-e^{-t}}2.
\]
We write
\begin{equation}\label{eq:sharp-s-def}
s=s(t)=\frac{1-e^{-t}}2.
\end{equation}
For $0<t\le1$,
\begin{equation}\label{eq:sharp-s-bounds}
\frac t4\le s(t)\le \frac t2.
\end{equation}

\begin{lemma}[A binomial overlap estimate]\label{lem:binomial-overlap}
There is a universal constant $c>0$ such that for all $M\ge1$, all $0<s\le1/2$, and independent random variables
\[
X\sim \operatorname{Bin}(M,s),
\qquad
Y\sim \operatorname{Bin}(M+1,s),
\]
one has
\[
\mathbb P(X=Y)\ge \frac{c}{\sqrt{1+Ms}}.
\]
\end{lemma}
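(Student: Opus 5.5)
We will reduce to a comparison of two binomial laws with the same success probability and then use concentration plus a local limit type lower bound. Write $Y=X'+\xi$ with $X'\sim\operatorname{Bin}(M,s)$ independent of $X$ and of $\xi\sim\operatorname{Bernoulli}(s)$. Since $\xi=0$ with probability $1-s\ge 1/2$, we get
\[
\mathbb P(X=Y)\ge \tfrac12\,\mathbb P(X=X').
\]
It therefore suffices to prove $\mathbb P(X=X')\ge c/\sqrt{1+Ms}$ for two independent copies of $\operatorname{Bin}(M,s)$.

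The quantity $\mathbb P(X=X')=\sum_k \mathbb P(X=k)^2$ is the collision probability. By Cauchy--Schwarz, for any set $I$ of integers,
\[
\sum_k \mathbb P(X=k)^2\ \ge\ \sum_{k\in I}\mathbb P(X=k)^2\ \ge\ \frac{\mathbb P(X\in I)^2}{\# I}.
\]
Let $\mu=Ms$ and $\sigma^2=Ms(1-s)\le \mu$. Choose $I=\{k\in\Z:\ |k-\mu|\le 2\sqrt{1+\mu}\}$. By Chebyshev,
\[
\mathbb P(X\notin I)\le \frac{\sigma^2}{4(1+\mu)}\le \frac14,
\]
so $\mathbb P(X\in I)\ge 3/4$. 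Also $\#I\le 4\sqrt{1+\mu}+1\le 5\sqrt{1+\mu}$. Hence
\[
\mathbb P(X=X')\ge \frac{9/16}{5\sqrt{1+Ms}},
\]
and then $\mathbb P(X=Y)\ge \frac{9}{160}(1+Ms)^{-1/2}$, which proves the statement with $c=9/160$.

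The only point needing care is the regime $Ms$ small, where a Gaussian/local CLT heuristic would fail. The Chebyshev-plus-Cauchy--Schwarz argument avoids this, since the window $I$ always contains at least the integers near $\mu$, including $0$ when $\mu<1$, and its size is controlled by $\sqrt{1+\mu}$ rather than by $\sigma$.

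The hypothesis $s\le 1/2$ is used only to ensure $\mathbb P(\xi=0)\ge 1/2$. I do not anticipate a genuine obstacle. The main step is the reduction $Y=X'+\xi$ together with conditioning on $\xi=0$, and that is elementary.
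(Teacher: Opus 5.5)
Your proof is correct and follows the paper's argument: the same decomposition $Y=Y_0+Z$ with conditioning on $Z=0$, followed by the Chebyshev-plus-Cauchy--Schwarz lower bound on the collision probability $\sum_k \mathbb P(X=k)^2$. The only difference is that your window of radius $2\sqrt{1+Ms}$ handles the regimes $Ms\le 1$ and $Ms\ge 1$ at once. The paper instead treats $Ms\le 1$ separately via $\mathbb P(X=X'=0)=(1-s)^{2M}\ge e^{-4}$ and uses a window of radius $2\sqrt{Ms}$ only when $Ms\ge 1$.
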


\begin{proof}
Write $Y=Y_0+Z$, where
\[
Y_0\sim \operatorname{Bin}(M,s),
\qquad
Z\sim \operatorname{Bernoulli}(s),
\]
and where $X,Y_0,Z$ are independent. Then
\[
\mathbb P(X=Y)
\ge
\mathbb P(X=Y_0,\ Z=0)
=
(1-s)\mathbb P(X=Y_0)
\ge
\frac12\mathbb P(X=Y_0).
\]
It remains to estimate the collision probability of two independent copies of $\operatorname{Bin}(M,s)$.
Let $X'$ be an independent copy of $X$ and put $\lambda=Ms$.

If $\lambda\le1$, then
\[
\mathbb P(X=X')\ge \mathbb P(X=0,X'=0)=(1-s)^{2M}.
\]
Since $s\le1/2$, $\log(1-s)\ge -2s$, and hence
\[
(1-s)^{2M}\ge e^{-4Ms}=e^{-4\lambda}\ge e^{-4}.
\]
This gives the desired bound when $\lambda\le1$.

Assume now that $\lambda\ge1$. Let
\[
I=\{k\in\mathbb Z: |k-\lambda|\le2\sqrt\lambda\}.
\]
Since $\operatorname{Var}(X)=Ms(1-s)\le\lambda$, Chebyshev's inequality gives
\[
\mathbb P(X\in I)\ge \frac34.
\]
Writing $p_k=\mathbb P(X=k)$ and using Cauchy--Schwarz,
\[
\mathbb P(X=X')
=\sum_k p_k^2
\ge \sum_{k\in I}p_k^2
\ge \frac{\left(\sum_{k\in I}p_k\right)^2}{|I|}.
\]
Moreover $|I|\le4\sqrt\lambda+1\le5\sqrt\lambda$. Hence
\[
\mathbb P(X=X')\ge \frac{(3/4)^2}{5\sqrt\lambda}
=\frac{9}{80\sqrt\lambda}.
\]
Combining the two cases yields
\[
\mathbb P(X=X')\ge \frac{c}{\sqrt{1+Ms}},
\]
and the preceding comparison between $Y$ and $Y_0$ proves the lemma.
\end{proof}

We now turn to the majority function. Let $N=2m$ be even and set
\[
f_N(x)=\mathbbm{1}_{\{x_1+\cdots+x_N\ge0\}},
\qquad x\in\Omega_N.
\]
Let
\[
B_N=\left\{x\in\Omega_N:\sum_{i=1}^N x_i=0\right\}
\]
be the central layer of the cube.

\begin{lemma}\label{lem:majority-coordinate-lower-bound}
There is a universal constant $c>0$ such that, for all sufficiently large even $N$, every $x\in B_N$, and every coordinate $j$ with $x_j=1$,
\[
R_jf_N(x)\ge c\frac{\log N}{\sqrt N}.
\]
\end{lemma}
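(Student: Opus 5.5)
The plan is to use the heat-semigroup representation \eqref{eq:sharp-heat-representation} together with the noise representation \eqref{eq:sharp-noise-representation}, and to compute $D_jH_tf_N(x)$ exactly as a binomial collision probability. The key observation is that $D_j$ commutes with $H_t$, since both are diagonal in the Walsh basis. Hence $D_jH_tf_N=H_tD_jf_N$, and it suffices to understand $D_jf_N$ explicitly.

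\textbf{Step 1: compute $D_jf_N$.} Write $\Sigma'(z)=\sum_{i\ne j}z_i$, which is odd since $N$ is even.
\begin{itemize}
\item If $z_j=1$, flipping coordinate $j$ changes the value of $f_N$ exactly when $\Sigma'(z)=-1$. In that case $D_jf_N(z)=\tfrac12$.
\item If $z_j=-1$, the value changes exactly when $\Sigma'(z)=-1$, and then $D_jf_N(z)=-\tfrac12$.
\end{itemize}
Hence
\[
D_jf_N(z)=\tfrac12\, z_j\,\mathbbm 1_{\{\Sigma'(z)=-1\}}.
\]

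\textbf{Step 2: apply the noise representation.} Using \eqref{eq:sharp-noise-representation} and the independence of $\eta_j$ from the other $\eta_i$, for $x\in B_N$ with $x_j=1$ we get
\[
H_tD_jf_N(x)=\tfrac12\,\E[\eta_j]\;\mathbb P\Big(\sum_{i\ne j}x_i\eta_i=-1\Big)=\tfrac{e^{-t}}2\,\mathbb P\Big(\sum_{i\ne j}x_i\eta_i=-1\Big).
\]
Among the coordinates $i\ne j$ there are $m-1$ with $x_i=1$ and $m$ with $x_i=-1$. Let $X\sim\operatorname{Bin}(m-1,s)$ count the sign flips among the former and $Y\sim\operatorname{Bin}(m,s)$ among the latter, with $X,Y$ independent. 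Then
\[
\sum_{i\ne j}x_i\eta_i=(m-1-2X)-(m-2Y)=-1-2X+2Y,
\]
so the event in question is exactly $\{X=Y\}$. Lemma~\ref{lem:binomial-overlap}, applied with $M=m-1\ge1$ (this holds for $N\ge4$) and $s=s(t)\le\tfrac12$, gives
\[
H_tD_jf_N(x)\ge \frac{c\,e^{-t}}{2\sqrt{1+ms}}.
\]
In particular this is nonnegative for every $t>0$.

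\textbf{Step 3: integrate.} Since the integrand in \eqref{eq:sharp-heat-representation} is nonnegative, we may discard $t>1$ and $t<1/N$. For $t\in[1/N,1]$, the bound \eqref{eq:sharp-s-bounds} gives $ms\le Nt/4$, so $1+ms\le 1+Nt\le 2Nt$. Also $e^{-t}\ge e^{-1}$ on this range. Therefore
\[
R_jf_N(x)\ge \frac{c}{2e\sqrt{2\pi}}\int_{1/N}^1 t^{-1/2}(Nt)^{-1/2}\,dt=\frac{c'}{\sqrt N}\,\log N,
\]
which is the claimed bound.

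\textbf{Main obstacle.} The only delicate point is Steps 1–2: getting the sign bookkeeping right so that $D_jf_N$ is supported on $\{\Sigma'=-1\}$ with sign $z_j$, and that the noised event reduces exactly to the collision $\{X=Y\}$ of binomials with parameters $m-1$ and $m$. This is precisely the situation Lemma~\ref{lem:binomial-overlap} is designed for. Once that identification is made, positivity of the integrand makes the remaining estimates routine.
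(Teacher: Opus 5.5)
Your proof is correct and is essentially the paper's argument: you reduce $D_jH_tf_N(x)$ to $\tfrac{e^{-t}}{2}\mathbb P(X=Y)$ for independent binomials with parameters $m-1$ and $m$, then apply Lemma~\ref{lem:binomial-overlap} and integrate $t^{-1}$ over a window to get $\log N$. The differences are cosmetic. The paper computes $H_tf_N(x)-H_tf_N(x^{(j)})$ directly inside the noise expectation instead of commuting $D_j$ with $H_t$, and it works on $t\in[16/N,1/4]$ with the bound $c/\sqrt{Ms}$ (using $Ms\ge1$), whereas you use the $c/\sqrt{1+Ms}$ form on $[1/N,1]$.
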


\begin{proof}
Fix $x\in B_N$ and $j$ such that $x_j=1$. For the noise variables in \eqref{eq:sharp-noise-representation}, put
\[
A=\sum_{i\neq j}x_i\eta_i.
\]
Then
\[
H_tf_N(x)=\mathbb E\,\mathbbm{1}_{\{A+\eta_j\ge0\}},
\qquad
H_tf_N(x^{(j)})=\mathbb E\,\mathbbm{1}_{\{A-\eta_j\ge0\}}.
\]
Therefore
\begin{align*}
2D_jH_tf_N(x)
&=H_tf_N(x)-H_tf_N(x^{(j)})  \\
&=\mathbb E\left(\mathbbm{1}_{\{A+\eta_j\ge0\}}-\mathbbm{1}_{\{A-\eta_j\ge0\}}\right).
\end{align*}
The random variable $A$ is a sum of $N-1$ signs and is therefore odd. If $\eta_j=1$, then
\[
\mathbbm{1}_{\{A+1\ge0\}}-\mathbbm{1}_{\{A-1\ge0\}}
=\mathbbm{1}_{\{A=-1\}}.
\]
If $\eta_j=-1$, then
\[
\mathbbm{1}_{\{A-1\ge0\}}-\mathbbm{1}_{\{A+1\ge0\}}
=-\mathbbm{1}_{\{A=-1\}}.
\]
Since $\mathbb E\eta_j=e^{-t}$ and $\eta_j$ is independent of $A$, we get
\begin{equation}\label{eq:sharp-DjHt-majority}
D_jH_tf_N(x)=\frac{e^{-t}}2\mathbb P(A=-1).
\end{equation}

Because $x\in B_N$ and $x_j=1$, among the coordinates $i\neq j$ there are $m-1$ indices with $x_i=1$ and $m$ indices with $x_i=-1$. Let $U$ be the number of flips among the $m-1$ positive coordinates, and let $V$ be the number of flips among the $m$ negative coordinates. Then
\[
U\sim \operatorname{Bin}(m-1,s),
\qquad
V\sim \operatorname{Bin}(m,s),
\]
independently, with $s=s(t)$ as in \eqref{eq:sharp-s-def}. Starting from
\[
\sum_{i\neq j}x_i=(m-1)-m=-1,
\]
each positive-coordinate flip decreases $A$ by $2$, while each negative-coordinate flip increases $A$ by $2$. Hence
\[
A=-1-2U+2V.
\]
Thus
\[
\{A=-1\}=\{U=V\}.
\]

We restrict $t$ to the interval
\[
\frac{16}{N}\le t\le \frac14.
\]
For $N$ sufficiently large this interval is nonempty. By \eqref{eq:sharp-s-bounds},
\[
\frac t4\le s\le \frac t2.
\]
Put $M=m-1$. Then $M\ge N/4$ for $N\ge4$. On the above range of $t$,
\[
Ms\ge \frac N4\cdot\frac t4\ge1,
\qquad
Ms\le \frac N2\cdot\frac t2=\frac{Nt}{4}.
\]
Lemma~\ref{lem:binomial-overlap} gives
\[
\mathbb P(A=-1)=\mathbb P(U=V)
\ge \frac{c}{\sqrt{Ms}}
\ge \frac{c}{\sqrt{Nt}}.
\]
Since $e^{-t}\ge e^{-1/4}$ on this interval, \eqref{eq:sharp-DjHt-majority} yields
\begin{equation}\label{eq:sharp-DjHt-lower}
D_jH_tf_N(x)\ge \frac{c}{\sqrt{Nt}},
\qquad \frac{16}{N}\le t\le\frac14.
\end{equation}
Finally, by the heat representation \eqref{eq:sharp-heat-representation},
\begin{align*}
R_jf_N(x)
&=\frac1{\sqrt\pi}\int_0^\infty t^{-1/2}D_jH_tf_N(x)\,dt \\
&\ge \frac{c}{\sqrt N}\int_{16/N}^{1/4}\frac{dt}{t}
\ge c\frac{\log N}{\sqrt N},
\end{align*}
after decreasing $c$ if necessary. This proves the lemma.
\end{proof}

\begin{proof}[Proof of Theorem~\ref{thm:sharpness}]
Let $N=2m$ be even and let $f_N$ be the majority function above. For every $x\in B_N$, exactly $N/2$ coordinates satisfy $x_j=1$. Lemma~\ref{lem:majority-coordinate-lower-bound} therefore implies
\[
|\vec R f_N(x)|
=\left(\sum_{j=1}^N |R_jf_N(x)|^2\right)^{1/2}
\ge
\left(\frac N2\cdot c^2\frac{(\log N)^2}{N}\right)^{1/2}
\ge c\log N
\]
for all $x\in B_N$.
Consequently,
\[
\|\vec R f_N\|_{L^p(\Omega_N;\ell^2_N)}
\ge c\log N\,\mu_N(B_N)^{1/p}.
\]
The central binomial estimate gives
\[
\mu_N(B_N)=2^{-N}\binom{N}{N/2}\ge c N^{-1/2}.
\]
Hence, for $p\ge2$,
\begin{equation}\label{eq:sharp-Lp-lower-before-choice}
\|\vec R f_N\|_{L^p(\Omega_N;\ell^2_N)}
\ge c\log N\,N^{-1/(2p)}.
\end{equation}

Given $p\ge2$, choose $N$ even with
\[
64e^{2p}\le N\le 64e^{2p}+2.
\]
Then $N\simeq e^{2p}$,
\[
\log N\ge 2p,
\]
and
\[
N^{-1/(2p)}\ge c
\]
with a universal constant $c>0$. From \eqref{eq:sharp-Lp-lower-before-choice},
\[
\|\vec R f_N\|_{L^p(\Omega_N;\ell^2_N)}\ge c p.
\]
Since $0\le f_N\le1$, we have $\|f_N\|_{L^p(\Omega_N)}\le1$, and therefore
\[
\frac{\|\vec R f_N\|_{L^p(\Omega_N;\ell^2_N)}}{\|f_N\|_{L^p(\Omega_N)}}
\ge c p.
\]
This proves Theorem~\ref{thm:sharpness}. If one wants a mean-zero example, replace $f_N$ by $f_N-\mathbb E_{\mu_N}f_N$; the Riesz transforms annihilate constants and the $L^p$ norm remains bounded by $1$.
\end{proof}

\section*{Acknowlegments}

P.I. acknowledges partial support from the NSF CAREER grant DMS-
2152401, NSF grant DMS- 2554183, a Simons Fellowship, and a Humboldt
Research Fellowship for Experienced Researchers.
S.P. acknolewdges support from the Alexander von Humboldt Stiftung.
A.V. was supported by NSF grant DMS-2154402. We thank Ramon van Handel for helpful discussions, in particular concerning the two-point inequality used in Section~\ref{two2}.

\bibliography{Bellman_Riesz.bib}
\bibliographystyle{alpha} 

\end{document}